\def\MR#1{\href{http://www.ams.org/mathscinet-getitem?mr=#1}{MR#1}}
\newtheorem{theorem}{Theorem}[section]
\newtheorem{proposition}[theorem]{Proposition}
\newtheorem{corollary}[theorem]{Corollary}
\newtheorem{fact}[theorem]{Fact}
\numberwithin{figure}{section}
\numberwithin{equation}{section}
\numberwithin{table}{section}
\newcommand{\R}{\mathbb{R}}
\newcommand{\C}{\mathbb{C}}
\newcommand{\E}{\mathbb{E}}
\newcommand{\prob}{\mathbb{P}}
\newcommand{\vol}{\operatorname{vol}}
\newcommand{\var}{\operatorname{var}}
\newcommand{\A}{\mathfrak{a}}
\newcommand{\B}{\mathfrak{b}}
\DeclareMathOperator{\tr}{tr}
\title{A central limit theorem for integrals of random waves}
\author{Matthew de Courcy-Ireland}
\address{Department of Mathematics\\
Princeton University\\
Princeton NJ 08544
and Institute of Mathematics, EPFL, Lausanne CH-1015} \email{mdc4@math.princeton.edu}
\author{Marius Lemm}
\address{School of Mathematics\\
Institute for Advanced Study\\
Princeton NJ 08540\\
and Department of Mathematics\\
Harvard University\\
Cambridge MA 02138
} \email{mlemm@math.harvard.edu}
\date{March 14, 2019}
\begin{document}

\begin{abstract}
We derive a central limit theorem for the mean-square of random waves in the high-frequency limit over shrinking sets. Our proof applies to any compact Riemannian manifold of arbitrary dimension, thanks to the universality of the local Weyl law. The key technical step is an estimate capturing some cancellation in a triple integral of Bessel functions, which we achieve using Gegenbauer's addition formula.
\end{abstract}

\maketitle

\section{Introduction} \label{sec:intro}

The goal of this paper is to prove a central limit theorem for $\int_B \phi^2$, where $\phi: M \rightarrow \R$ is a random wave and the ball $B$ may shrink with the wavelength of $\phi$. 
On any compact manifold $M$ with a Riemannian metric and corresponding Laplace operator $\Delta$, the random functions we have in mind are given by
\begin{equation} \label{eqn:def-sum}
\phi(x) = \sum_j c_j \phi_j(x)
\end{equation}
where the eigenfunctions $\phi_j$ solve $(\Delta + t_j^2)\phi_j = 0$ with eigenvalues $t_j^2$ in a window  $T - \eta(T) \leq t_j \leq T$.
The coefficients $c_j$ are independent Gaussians of mean 0 and identical variance.  
The choice of variance will disappear when we pass to the standardized random variable
\begin{equation}
Z = Z_B = \frac{\int_B \phi^2 - \E[\int_B \phi^2] }{\sqrt{\var[\int_B \phi^2 ]}},
\end{equation}
which is invariant under scaling $\phi$ by a constant multiple or, what is the same, scaling the variance of the coefficients. Nevertheless, it is natural to choose  the variance of each coefficient $c_j$ inversely proportional to the number of terms in the sum (\ref{eqn:def-sum}), so that $\E[ \int_M \phi^2 ] = 1$.
A standard Gaussian $G$ has characteristic function
\[
\E[ e^{it G} ] = e^{-t^2/2}
\]
and our main result is an estimate comparing $\E[e^{itZ}]$ to this.

\begin{theorem} \label{thm:main}
For any compact Riemannian manifold $M$ of dimension $n \geq 2$, any window $\eta(T) \rightarrow \infty$ with $\eta(T)\max(T^{-1/2},r^{2}T)$
bounded above, and a geodesic ball $B = B_r(z)$ shrinking at such a rate that $rT \rightarrow \infty$ and $(rT)^{n-1}/\eta \rightarrow 0$, we have convergence of characteristic functions
\begin{equation} \label{eqn:clt}
\E\left[ e^{itZ} \right] = e^{-t^2/2} \left( 1 + O\big((rT)^{ -1/2} \big) \right)
\end{equation}
for any fixed parameter $t \in \C$.
\end{theorem}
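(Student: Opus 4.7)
The plan is to exploit the Gaussian structure of $\phi$ to reduce the characteristic function of $Z$ to a determinantal formula and then control each term in a trace expansion. Normalize $c_j\sim N(0,1)$ (the scale invariance of $Z$ permits this) and set $A_{jk} = \int_B \phi_j\phi_k$, so that $\int_B\phi^2 = c^\top A c$ with $A$ a finite symmetric positive semi-definite matrix. Then $\E[c^\top A c] = \tr(A)$, $\var[c^\top A c] = 2\tr(A^2)$, and the classical identity $\E[e^{i\tau c^\top A c}] = \det(I-2i\tau A)^{-1/2}$, together with the expansion $\tr\log(I-2isA) = -\sum_{k\geq 1}(2is)^k\tr(A^k)/k$, give, with $s = t/\sqrt{2\tr(A^2)}$,
\[
\log\E[e^{itZ}] = -\frac{t^2}{2} + \sum_{k\geq 3}\frac{(2is)^k}{2k}\tr(A^k).
\]
Proving the theorem reduces to showing this remainder is $O((rT)^{-1/2})$.

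Each trace has a clean kernel representation: $\tr(A^k) = \int_{B^k}\Pi(x_1,x_2)\cdots\Pi(x_k,x_1)\,dx_1\cdots dx_k$, where $\Pi(x,y) = \sum_j \phi_j(x)\phi_j(y)$ is the spectral projector onto the frequency window $[T-\eta,T]$. The universality of the local Weyl law, valid on any compact Riemannian manifold, identifies $\Pi(x,y)$ to leading order with a universal Bessel-type kernel of the Euclidean model problem in a geodesic chart; this kernel peaks like $\eta T^{n-1}$ on the diagonal and has an oscillatory tail with envelope of order $\eta T^{n-1}(Td(x,y))^{-(n-1)/2}$. Since $B$ is a geodesic ball of radius $r\to 0$, this approximation applies throughout the domain of integration.

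A direct Bessel computation estimates $\tr(A^2)$ from below, large enough that $s\to 0$. For $k\geq 4$, I would combine the trivial operator-norm bound $\|A\|_{\rm op}\leq 1$, which follows from $\int_B\phi^2\leq\int_M\phi^2 = |c|^2$, with the resulting inequality $\tr(A^k)\leq\tr(A^2)$ to obtain $\sum_{k\geq 4}(2s)^k\tr(A^k)/(2k) = O(1/\tr(A^2))$, which is smaller than $(rT)^{-1/2}$ under the hypotheses. This isolates the $k=3$ term as the main contribution.

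The main difficulty is the triple integral $\tr(A^3) = \int_{B^3}\Pi(x,y)\Pi(y,z)\Pi(z,x)\,dx\,dy\,dz$, for which a naive absolute-value estimate falls short of the required bound by a factor of order $(rT)^{n-1}$; one must extract cancellation among the three oscillating Bessel factors. My plan is to apply Gegenbauer's addition formula to one of them, expanding $\Pi(z,x)$ as a sum over $\ell$ of products $J_{\nu+\ell}(Td(z,y))J_{\nu+\ell}(Td(y,x))$ weighted by Gegenbauer polynomials in the angle at $y$. This decouples the $z$ and $x$ dependence and reduces the triple integral to a weighted sum over $\ell$ of essentially separable one-point integrals, which one can control via orthogonality of the Gegenbauer polynomials and the integrated decay of Bessel functions. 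The net effect is a saving of $(rT)^{-1/2}$ over the trivial bound, and inserting this into the trace expansion yields the announced error $O((rT)^{-1/2})$ in the characteristic function.
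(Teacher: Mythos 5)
Your overall architecture (Gaussian quadratic form, determinantal characteristic function, trace expansion, kernel representation of $\tr(A^k)$, local Weyl law, Gegenbauer addition formula for the triple integral) is the same as the paper's, but two steps as written do not work. First, your disposal of the traces with $k \geq 4$ is incorrect. Under the hypotheses one has $\tr(A^2) = \int_B\int_B K^2 \asymp (rT)^{n-1}r^2\eta^2 \to 0$ (in your variance-one normalization), so $s = t/\sqrt{2\tr(A^2)} \to \infty$, not $s \to 0$ as you assert. Consequently the bound $\|A\|_{\rm op}\le 1$, hence $\tr(A^k)\le\tr(A^2)$, gives terms like $(2s)^4\tr(A^4)/8 \gtrsim t^4/\tr(A^2) \to \infty$: the tail $\sum_{k\ge4}$ is not $O(1/\tr(A^2))$ in any useful sense, and "$O(1/\tr(A^2))$" is in any case huge, not smaller than $(rT)^{-1/2}$. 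What is actually needed is $s\lambda_{\max}\to 0$, which requires a nontrivial bound on $\lambda_{\max}$ relative to $\sigma=\sqrt{2\tr(A^2)}$. The paper handles all $k\ge3$ at once via monotonicity of $\ell^p$ norms, $\tr(A^k)\le\tr(A^3)^{k/3}$, so that the whole tail is a geometric series with ratio $2|t|\tr(A^3)^{1/3}/\sigma \asymp |t|(rT)^{-1/6}$ and total $O(|t|^3\tr(A^3)/\tr(A^2)^{3/2})$; your plan is repairable this way, but only because the third-moment bound you target also controls $\lambda_{\max}$ — the operator-norm shortcut by itself fails.

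Second, the statement covers $n\ge2$, but your Gegenbauer strategy is only available for $n\ge3$: the addition formula requires $\nu = n/2-1 \neq 0$, and in dimension two the weight $(1-t^2)^{\nu-1/2}$ and the polynomial bounds degenerate. The paper treats $n=2$ by a separate argument, reducing (via the universality of the Euclidean limit of the triple integral) to $M=S^2$, where the eigenvalues of the quadratic form are ultraspherical integrals obeying a semicircle law that yields $\sum_j\lambda_j^p/\sigma^p \lesssim (rT)^{-p/2+1}$ directly. Your proposal is silent on this case. A smaller quantitative slip: the naive absolute-value bound on the triple Bessel integral gives $\vol(B)^3(rT)^{-n+1}$ while the target is $\vol(B)^3(rT)^{-3n/2+1}$, so the cancellation you must extract is a factor $(rT)^{n/2}$, not the factor $(rT)^{1/2}$ ("saving of $(rT)^{-1/2}$") you describe; in the paper this comes from the combination of the angular estimate $\bigl|\int_{\A}\int_{\B}C_k^{\nu}(\alpha\cdot\beta)\,d\alpha\,d\beta\bigr| \lesssim k^{n/2-3}$ (an improvement over the trivial $k^{n-3}$) with the one-variable estimate $\int_0^{O(rT)} u|J_{\nu}J_{\nu+k}|\,du \lesssim rT$ and exponential decay for $k \gtrsim rT$, none of which your sketch supplies.
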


The characteristic function $\E\big[ e^{itZ} \big]$ is defined for all real $t$ and, once $rT$ is sufficiently large, for any fixed complex $t$.
In particular, Theorem~\ref{thm:main} implies convergence in distribution.
\begin{corollary} \label{cor:clt} (Central Limit Theorem)
In the limit that $T \rightarrow \infty$ with $r \rightarrow 0$, $\eta \rightarrow \infty$ as above, the standardized mean squares $Z_B$ converge in distribution to a Gaussian of mean 0 and variance 1:
\[
Z_B \rightarrow N(0,1).
\]
\end{corollary}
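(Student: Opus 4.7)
The plan is to deduce the corollary from Theorem~\ref{thm:main} via L\'evy's continuity theorem, which states that a sequence of random variables converges in distribution to a given law whenever the corresponding characteristic functions converge pointwise to the characteristic function of that law, provided the latter is continuous at $t = 0$.

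First I would fix any $t \in \R$ and read off from \eqref{eqn:clt} that
\[
\E[e^{itZ_B}] = e^{-t^2/2}\bigl(1 + O((rT)^{-1/2})\bigr).
\]
Under the scaling regime of the corollary we have $rT \to \infty$, so the error term vanishes and $\E[e^{itZ_B}] \to e^{-t^2/2}$ for every real $t$. Since $e^{-t^2/2}$ is precisely the characteristic function of $N(0,1)$ and is continuous on all of $\R$, L\'evy's theorem immediately yields $Z_B \to N(0,1)$ in distribution.

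No genuine obstacle arises: the corollary is essentially a repackaging of Theorem~\ref{thm:main} through the standard dictionary between convergence of characteristic functions and convergence in distribution. The only point that merits a brief check is that Theorem~\ref{thm:main} is indeed available for every real $t$ --- which it is, since it is stated for any fixed $t \in \C$ --- and that L\'evy's theorem needs only pointwise, not uniform, convergence of characteristic functions.
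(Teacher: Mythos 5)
Your proposal is correct and follows essentially the same route as the paper: the paper likewise deduces the corollary directly from Theorem~\ref{thm:main} by noting that the characteristic functions converge for every fixed $t$, with L\'evy's continuity theorem being the (implicit) bridge to convergence in distribution. Your explicit mention of the continuity of $e^{-t^2/2}$ at $t=0$ and of pointwise (rather than uniform) convergence simply spells out what the paper leaves tacit.
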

For example, one could take $\eta = \log{T}$ and $r = \log(T)^{\varepsilon}/T$ for a sufficiently small $\varepsilon > 0$.
It is important for our proof that $r \rightarrow 0$, although one might expect the Central Limit Theorem to apply also for sufficiently small fixed radii 
in the limit $T \rightarrow \infty$.
The scale $1/T$ is significant because it is the natural wavelength for Laplace eigenfunctions of frequency $T$, and hence for the random waves (\ref{eqn:def-sum}). We assume that $rT \rightarrow \infty$ so that a length scale $r$ is enough to contain many oscillations of $\phi$, and we have in mind that $rT \rightarrow \infty$ arbitrarily slowly so that one is almost at the wave scale. We do not expect the central limit theorem to hold if $rT$ remains bounded, as we explain in Section~\ref{sec:fail}, so that the wave scale provides a natural barrier.

We now give an overview of our strategy for proving Theorem~\ref{thm:main} and some of the notation we will employ.
Throughout the paper, we write $\sum_j$ as an abbreviation for sums over a given window of frequencies $T-\eta(T) \leq t_j \leq T$. For example, we have already done so in (\ref{eqn:def-sum}).
We use the notations
\[
f(X) = O\big( g(X) \big), \quad f(X) \lesssim g(X)
\]
with the same meaning: there is a constant $C > 0$ such that $|f(X)| \leq C g(X)$ for all sufficiently large values of the variable(s) $X$.
We write $f(X) \asymp g(X)$ if both of the approximate inequalities $f(X) \lesssim g(X)$ and $g(X) \lesssim f(X)$ hold, that is, there are constants $C_1, C_2 > 0$ such that
\[
C_1 g(X) \leq f(X) \leq C_2 g(X).
\]
If the constants $C, C_1, C_2, \ldots$ depend on parameters, we indicate this with subscripts, as in
\[
\log{X} \lesssim_{\varepsilon} X^{\varepsilon}.
\]
The function $f$ is not necessarily real when we write $f(X) \lesssim g(X)$, but we only write $f(X) \asymp g(X)$ when both functions are positive.
In Section~\ref{sec:quad}, we compute the characteristic function of a quadratic form in Gaussian random variables, which includes $\int_B \phi^2$ as a special case.
We give a sufficient condition for such a quadratic form to obey the Central Limit Theorem.
\begin{proposition} \label{prop:quads}
Given a sequence of symmetric, positive definite $N \times N$ matrices $A = A_N$ and standard Gaussian vectors $\mathfrak{z} = (\mathfrak{z}_1, \ldots, \mathfrak{z}_N)$, let $Z$ be the standardized quadratic form
\[
Z = \frac{ \mathfrak{z}^T A \mathfrak{z} - \E\left[ \mathfrak{z}^T A \mathfrak{z} \right] }{\sqrt{\var\left[ \mathfrak{z}^T A \mathfrak{z} \right] } }.
\]
If
\[
\frac{\tr(A^3)}{\tr(A^2)^{3/2} } \rightarrow 0
\]
as $N \rightarrow \infty$,
then the characteristic function of $Z$ obeys
\[
\E\left[ e^{itZ} \right] = e^{-t^2/2} \left( 1 + O\left( |t|^3 \frac{ \tr(A^3) }{\tr(A^2)^{3/2}} \right) \right).
\]
\end{proposition}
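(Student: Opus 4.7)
The plan is to diagonalize $A$ and reduce to an exact computation of the characteristic function of a weighted sum of independent squared Gaussians. Writing $A=U^{T}DU$ with $U$ orthogonal and $D=\operatorname{diag}(\lambda_1,\ldots,\lambda_N)$, the rotational invariance of the standard Gaussian vector gives that $\mathfrak z^{T}A\mathfrak z$ has the same law as $\sum_j \lambda_j y_j^{2}$ with $y_j$ independent standard Gaussians. From this one reads off $\E[\mathfrak z^{T}A\mathfrak z]=\tr(A)$ and $\var[\mathfrak z^{T}A\mathfrak z]=2\tr(A^{2})$, and the elementary identity $\E[e^{isy^{2}}]=(1-2is)^{-1/2}$ then yields
\[
\E\!\left[e^{is\mathfrak z^{T}A\mathfrak z}\right]=\prod_j (1-2is\lambda_j)^{-1/2}
\]
for $s$ in a neighborhood of the origin.

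Setting $\tilde t=t/\sqrt{2\tr(A^{2})}$ and absorbing the mean shift gives
\[
\log \E[e^{itZ}]= -i\tilde t\,\tr(A)-\tfrac{1}{2}\sum_j \log(1-2i\tilde t\lambda_j).
\]
I would expand each summand via $-\tfrac12\log(1-u)=\sum_{k\ge 1}u^{k}/(2k)$ with $u=2i\tilde t\lambda_j$ and sort by powers of $k$. The $k=1$ contribution cancels $-i\tilde t\,\tr(A)$; the $k=2$ contribution equals $-\tilde t^{\,2}\tr(A^{2})=-t^{2}/2$ by the very choice of $\tilde t$; and what remains is the error
\[
\log \E[e^{itZ}]+\frac{t^{2}}{2}=\sum_{k\ge 3}\frac{(2i\tilde t)^{k}}{2k}\tr(A^{k}).
\]

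The hard part is to bound this tail by a constant multiple of $|t|^{3}\tr(A^{3})/\tr(A^{2})^{3/2}$ while simultaneously justifying the termwise Taylor expansion. For positive definite $A$ one has $\tr(A^{k})\le \lambda_{\max}^{k-3}\tr(A^{3})$ for each $k\ge 3$, and the trivial inequality $\lambda_{\max}\le \tr(A^{3})^{1/3}$ implies
\[
\frac{\lambda_{\max}}{\sqrt{\tr(A^{2})}}\le \left(\frac{\tr(A^{3})}{\tr(A^{2})^{3/2}}\right)^{1/3}\longrightarrow 0
\]
under the standing hypothesis. Thus for any fixed $t\in\C$ and $N$ large enough, $|2\tilde t\lambda_j|<\tfrac12$ uniformly in $j$, so the Taylor expansion is legitimate and the remainder is dominated by a convergent geometric series of total size $O(|t|^{3}\tr(A^{3})/\tr(A^{2})^{3/2})$. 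Finally, applying $|e^{z}-1|\le |z|e^{|z|}$ converts this additive error in the logarithm into the multiplicative error stated in the proposition.
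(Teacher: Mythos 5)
Your proposal is correct and follows essentially the same route as the paper: diagonalize $A$, use the exact formula $\prod_j(1-2i\tilde t\lambda_j)^{-1/2}$, expand the logarithm so the linear term cancels and the quadratic term gives $-t^2/2$, and bound the tail $\sum_{k\ge 3}$ by a geometric series controlled by $\tr(A^3)/\tr(A^2)^{3/2}$ before exponentiating. Your bound $\tr(A^k)\le\lambda_{\max}^{k-3}\tr(A^3)$ with $\lambda_{\max}\le\tr(A^3)^{1/3}$ is equivalent to the paper's use of monotonicity of $p$-norms, $\tr(A^p)\le\tr(A^3)^{p/3}$, and your explicit verification that $2|\tilde t|\lambda_{\max}<1$ for large $N$ just makes precise a point the paper treats more briefly.
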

This can be seen as an instance of Lyapunov's criterion for deriving a central limit theorem \cite[p.362]{Bil}: If $S = X_1 + \ldots X_N$ is a sum of independent random variables, perhaps not with the same distribution, and $\E[|S|^{2+\delta}]$ is small compared to $\E[|S|^2]^{1+\delta/2}$ for some $\delta > 0$, then $S$ converges to a Gaussian as $N \rightarrow \infty$. Proposition~\ref{prop:quads} takes $\delta = 1$, and one diagonalizes the quadratic form to obtain a sum of independent random variables. For a quadratic form in Gaussians, one can replace the general argument of Lyapunov's criterion with an explicit computation of the characteristic function.

In the case of $\int_B \phi^2$, the matrix $A$ has a special form that allows the traces $\tr(A^p)$ to be computed in terms of the \emph{two-point function}
\begin{equation} \label{eqn:kxy}
K(x,y) = \sum_j \phi_j(x)\phi_j(y).
\end{equation}
We have $\E[\phi(x)\phi(y)] = \var[c]K(x,y)$ so that, after choosing the variance of the coefficients, $K(x,y)$ gives the correlations between values of $\phi$ at different points.
In Section~\ref{sec:moments-K}, we reformulate the criterion of Proposition~\ref{prop:quads} as
\begin{proposition} \label{prop:kmom}
If $\phi = \sum_j c_j \phi_j$ is a Gaussian random function with two-point function $K(x,x')$
satisfying
\[
\frac{ \int_B \int_B \int_B K(x_1,x_2)K(x_2,x_3)K(x_3,x_1) dx_3 dx_2 dx_1 }{\left( \int_B \int_B K(x,x')^2 dx dx' \right)^{3/2}} \rightarrow 0
\]
as the ball $B$ shrinks, then the local integral $\int_B \phi^2$ obeys the Central Limit Theorem in the form
\[
\E\left[ e^{itZ} \right] = e^{-t^2/2} \left( 1 + O\left( |t|^3 \frac{ \int_B \int_B \int_B K(x_1,x_2)K(x_2,x_3)K(x_3,x_1) dx_3 dx_2 dx_1 }{\left( \int_B \int_B K(x,x')^2 dx dx' \right)^{3/2}}  \right) \right).
\]
\end{proposition}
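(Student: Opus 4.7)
The plan is to recognize $\int_B \phi^2$ as a quadratic form in the Gaussian coefficients $c_j$ and then invoke Proposition~\ref{prop:quads}, reducing the task to rewriting the traces in terms of $K$.

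First I would expand
\[
\int_B \phi^2 = \sum_{j,k} c_j c_k \int_B \phi_j(x) \phi_k(x) \, dx,
\]
so that $\int_B \phi^2 = \mathfrak{z}^T A \mathfrak{z}$ with $A_{jk} = \sigma^2 \int_B \phi_j \phi_k$, after writing $c_j = \sigma \mathfrak{z}_j$ to standardize the coefficient vector $\mathfrak{z}$. The matrix $A$ is symmetric by inspection. For positive definiteness, note that any nonzero finite combination $u = \sum_j v_j \phi_j$ of Laplace eigenfunctions is real-analytic and cannot vanish on the open ball $B$ by unique continuation; linear independence of the $\phi_j$ then forces $v^T A v = \sigma^2 \int_B u^2 > 0$ for every nonzero $v$.

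Next I would compute the traces by exchanging the finite sums over the frequency window with the integrations over $B$. The identity $\sum_j \phi_j(x) \phi_j(y) = K(x,y)$ collapses the double sum in $\tr(A^2)$ and the triple sum in $\tr(A^3)$ into
\begin{align*}
\tr(A^2) &= \sigma^4 \int_B \int_B K(x,y)^2 \, dx \, dy, \\
\tr(A^3) &= \sigma^6 \int_B \int_B \int_B K(x_1,x_2) K(x_2,x_3) K(x_3,x_1) \, dx_1 \, dx_2 \, dx_3.
\end{align*}

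Finally, the ratio $\tr(A^3)/\tr(A^2)^{3/2}$ is manifestly scale invariant in $\sigma$ and equals the ratio of $K$-integrals appearing in the hypothesis; this is also consistent with the fact that the standardized variable $Z$ is independent of the choice of $\var[c]$. Feeding this identification into Proposition~\ref{prop:quads} yields exactly the characteristic function estimate stated. The ``hard part'' here is genuinely mild: the computation is essentially bookkeeping that leverages the reproducing structure of $K$, and the only subtle point is justifying positive definiteness of $A$, which we handle by appealing to unique continuation so that the open ball $B$ cannot annihilate any nontrivial linear combination of eigenfunctions.
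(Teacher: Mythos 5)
Your proposal is essentially the paper's own argument: expand $\int_B\phi^2$ as the quadratic form $\mathfrak{z}^TA\mathfrak{z}$ with $A_{jk}=\var[c]\int_B\phi_j\phi_k$, interchange the finite sums over the window with the integrals so that $\tr(A^2)$ and $\tr(A^3)$ collapse via $K(x,y)=\sum_j\phi_j(x)\phi_j(y)$ into the double and triple $K$-integrals, and feed the ($\var[c]$-invariant) ratio into Proposition~\ref{prop:quads}. The only side remark is that your positivity argument overclaims real-analyticity (which needs an analytic metric), though unique continuation after separating distinct eigenvalues--or simply the positive semidefiniteness $v^TAv=\var[c]\int_B(\sum_j v_j\phi_j)^2\geq 0$, which is all the eigenvalue computation uses--handles this point, one the paper itself leaves implicit.
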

Theorem~\ref{thm:main} is then deduced from the following two estimates:
\begin{theorem} \label{thm:2nd-mom}
In any dimension $n \geq 2$,
\[
\int_B \int_B K(x,x')^2 dxdx' \asymp (rT)^{-n+1} \big( \vol(B) T^{n-1} \eta \big)^2.
\]
\end{theorem}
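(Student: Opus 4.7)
My plan is to apply the local Weyl law in geodesic normal coordinates centered at $z$. Differencing the sharp local Weyl law (H\"ormander--Safarov--Vassiliev) at energies $T-\eta$ and $T$ gives the universal asymptotic
\[
K(x,y) = A_n T^{n-1}\eta\, \mathcal{B}_n\big(T d(x,y)\big) + O\big(T^{n-1}\big),
\]
where $\mathcal{B}_n(s) = c_n s^{-(n-2)/2} J_{(n-2)/2}(s)$ is the normalized Bessel kernel satisfying $\mathcal{B}_n(0) = 1$ and $|\mathcal{B}_n(s)| \lesssim \min\big(1, s^{-(n-1)/2}\big)$. Because $\eta \to \infty$, the relative pointwise error is $O(1/\eta)$, and the hypothesis $\eta r^2 T = O(1)$ controls the deviation of the metric from Euclidean in normal coordinates, so this universal form is valid uniformly on $B$.

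\textbf{Reduction to a Euclidean integral.} Substituting into $\int_B\int_B K^2$, working in normal coordinates (where the volume form and $d(x,y)$ differ from their Euclidean counterparts by $1 + O(r^2)$), the leading term is
\[
\big(A_n T^{n-1}\eta\big)^2 \int_{B_r}\int_{B_r} \mathcal{B}_n(T|x-y|)^2 \, dx\, dy.
\]
Changing variables to $u = x - y$ and using the elementary fact that the cross-section $\{(x,y) \in B_r \times B_r : x - y = u\}$ has volume $\asymp \vol(B)$ uniformly for $|u| \leq r$, this reduces to
\[
\asymp \vol(B)\,\big(T^{n-1}\eta\big)^2 \int_0^{2r} \mathcal{B}_n(T\rho)^2 \rho^{n-1} \, d\rho.
\]

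\textbf{Bessel asymptotics.} Split the radial integral at $\rho = 1/T$. On the near-diagonal range $[0, 1/T]$ one has $\mathcal{B}_n^2 \asymp 1$, contributing only $\asymp T^{-n}$. On $[1/T, 2r]$, the standard asymptotic $J_\nu(s)^2 \sim (2/\pi s)\cos^2\big(s - (2\nu+1)\pi/4\big)$ gives $\mathcal{B}_n(T\rho)^2 \asymp (T\rho)^{-(n-1)} \cos^2(T\rho - \varphi)$, and the substitution $s = T\rho$ yields
\[
\int_{1/T}^{2r} \mathcal{B}_n(T\rho)^2 \rho^{n-1} \, d\rho \asymp T^{-n} \int_1^{2rT} \cos^2(s - \varphi)\, ds \asymp r T^{-(n-1)}.
\]
Since $rT \to \infty$, this dominates the near-diagonal piece, giving
\[
\int_B\int_B K^2 \asymp \vol(B)\,\big(T^{n-1}\eta\big)^2 \cdot r T^{-(n-1)} = (rT)^{-(n-1)}\big(\vol(B)\, T^{n-1}\eta\big)^2,
\]
as claimed.

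\textbf{Main obstacle.} The upper bound is immediate from $|\mathcal{B}_n(s)| \lesssim \min(1, s^{-(n-1)/2})$. The delicate point is the \emph{lower bound}: because $\mathcal{B}_n^2$ oscillates, one must exploit its non-negativity period by period, using that $\cos^2$ averages to $1/2$ on each period of length $\pi$. The hypothesis $rT \to \infty$ guarantees that $[1/T, 2r]$ contains many periods, so this averaging is effective. One must also verify that replacing the Weyl error $O(T^{n-1})$ in $K^2$ is harmless; by Cauchy--Schwarz the cross term contributes at most $\sqrt{(\vol(B)^2 T^{2(n-1)}) \cdot (\text{main term})}$, which is a factor $(rT)^{(n-1)/2}/\eta$ smaller than the main term and thus vanishes under the standing hypotheses $\eta T^{-1/2}, \eta r^2 T = O(1)$ together with $\eta \to \infty$. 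These are precisely the places where the technical assumptions of Theorem~\ref{thm:main} enter.
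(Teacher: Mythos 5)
Your overall route is the same as the paper's: approximate $K$ by the Bessel kernel via the local Weyl law, reduce $\int_B\int_B K^2$ to a radial integral of $\mathcal{B}_n(T\rho)^2\rho^{n-1}$, obtain the upper bound from the decay $|\mathcal{B}_n(s)|\lesssim\min(1,s^{-(n-1)/2})$, and obtain the lower bound from the fact that the oscillatory square has positive mean (the paper phrases this as the positive Ces\`aro average of $uJ_{n/2-1}(u)^2$ after inscribing a smaller concentric ball; your $u=x-y$ cross-section argument plus $\cos^2$ averaging is the same idea). Two small points of care: the cross-section volume is $\asymp\vol(B)$ only for $|u|\le r$, so use that range for the lower bound and the trivial bound $\le\vol(B)$ on $[0,2r]$ for the upper bound; and $\mathcal{B}_n(s)^2\asymp s^{-(n-1)}\cos^2(s-\varphi)$ is not a pointwise $\asymp$ (the zeros are offset) -- state it as an asymptotic with an $O(s^{-n})$ error, whose contribution to the radial integral is $O(T^{-n}\log(rT))$ and harmless.

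The genuine flaw is in your final error-control step. Your Cauchy--Schwarz bound correctly shows the cross term is smaller than the main term by the factor $(rT)^{(n-1)/2}/\eta$, but the claim that this factor vanishes ``under the standing hypotheses $\eta T^{-1/2},\eta r^2T=O(1)$ together with $\eta\to\infty$'' is false: those are \emph{upper} bounds on $\eta$ and do not force $\eta$ to beat any power of $rT$. For example $\eta=\log T$, $rT=(\log T)^{10}$, $n=3$ satisfies all three cited conditions while $(rT)^{(n-1)/2}/\eta=(\log T)^{9}\to\infty$, so as written your remainder would swamp the main term. What actually controls the Weyl remainder is the hypothesis $(rT)^{n-1}/\eta\to 0$ of Theorem~\ref{thm:main}, which gives $\eta\gg(rT)^{(n-1)/2}$; this is precisely where the paper invokes that assumption, both to make the lower bound survive the $O(T^{n-1})$ error and to absorb the extra terms $\eta^{-1}(rT)^{-(n-1)/2}+\eta^{-2}$ in the matching upper bound. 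Relatedly, the hypothesis $\eta r^2T=O(1)$ plays no role in the second moment; in the paper it enters only in the Euclidean comparison for the third moment in Section~\ref{sec:euclid}, while here the $1+O(r^2)$ distortion of distances and the volume form is already enough since $r\to 0$.
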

\begin{theorem} \label{thm:3rd-mom}
In dimensions $n \geq 3$,
\[
\int_B \int_B \int_B K(x_1,x_2)K(x_2,x_3)K(x_3,x_1) dx_3 dx_2 dx_1 \lesssim (rT)^{-3n/2+1} \big( \vol(B) T^{n-1} \eta \big)^3.
\]
\end{theorem}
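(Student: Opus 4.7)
The plan is to replace $K$ by its local Weyl asymptotic, reduce the triple integral to a purely Euclidean integral of Bessel kernels, and then evaluate the resulting integral using Gegenbauer's addition formula, which exposes the oscillatory cancellation one needs.

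First I would use the pointwise local Weyl law to obtain, uniformly on $B\times B$,
\begin{equation*}
K(x,y) = C_n\,T^{n-1}\eta\cdot j\bigl(T d(x,y)\bigr)\bigl(1+o(1)\bigr),\qquad j(\rho):=\frac{J_{(n-2)/2}(\rho)}{\rho^{(n-2)/2}},
\end{equation*}
the remainder being controlled because $\eta\to\infty$. Passing to geodesic normal coordinates at $z$ converts $d(x,y)$ into $|x-y|$ up to an $O(r^2)$ error, and after the substitution $u_i = Tx_i$ with $R := rT$ the claim reduces to the purely Euclidean estimate
\begin{equation*}
I(R) := \iiint_{|u_i|<R} j(|u_1-u_2|)\,j(|u_2-u_3|)\,j(|u_3-u_1|)\,du_1\,du_2\,du_3 \;\lesssim\; R^{3n/2+1}.
\end{equation*}
A pointwise application of the decay $|j(\rho)|\lesssim(1+\rho)^{-(n-1)/2}$ yields only $I(R)\lesssim R^{(3n+3)/2}$, so the task is to save a factor $R^{1/2}$ by exploiting the sign changes of the three Bessel factors.

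To extract this cancellation I would write $u_i = \rho_i\hat u_i$ in polar coordinates around $z$ and apply Gegenbauer's addition formula (valid for $\nu = (n-2)/2 > 0$)
\begin{equation*}
j(|u-v|) = 2^\nu\Gamma(\nu)\sum_{k=0}^\infty(\nu+k)\,\frac{J_{\nu+k}(|u|)\,J_{\nu+k}(|v|)}{(|u|\,|v|)^\nu}\,C_k^\nu(\hat u\cdot\hat v)
\end{equation*}
to each of the three factors of $I(R)$. Combined with the spherical-harmonic addition formula $C_k^\nu(\hat u\cdot\hat v)\propto \sum_m Y_{k,m}(\hat u)\overline{Y_{k,m}(\hat v)}$, the angular integration $\iiint d\sigma(\hat u_1)\,d\sigma(\hat u_2)\,d\sigma(\hat u_3)$ collapses by orthogonality of spherical harmonics to the diagonal $k_1=k_2=k_3=k$ with matching $m$-indices, leaving
\begin{equation*}
I(R) \;=\; \sum_{k=0}^\infty \alpha_{k,n}\left[\int_0^R \rho\,J_{\nu+k}(\rho)^2\,d\rho\right]^3,\qquad \alpha_{k,n}\asymp\dim\mathcal{H}_k\asymp k^{n-2}.
\end{equation*}
The exact evaluation $\int_0^R\rho J_\mu(\rho)^2\,d\rho = \tfrac{R^2}{2}\bigl[J_\mu(R)^2 - J_{\mu-1}(R)J_{\mu+1}(R)\bigr]$ together with standard Bessel asymptotics then controls each radial integral, and one sums in $k$.

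The main obstacle is the Bessel turning-point regime $|k - R|\lesssim R^{1/3}$, where the oscillatory $R^{-1/2}$ decay of $J_{\nu+k}(R)$ gives way to the Airy size $R^{-1/3}$ and the radial integral saturates at $O(R^{4/3})$ rather than $O(R)$. Although this regime comprises only $\sim R^{1/3}$ values of $k$, its total contribution is of size $R^{n+7/3}$ and is the dominant one, and it meets the required bound $R^{3n/2+1}$ precisely when $n\geq 3$. The hypothesis $n\geq 3$ is also forced by the Gegenbauer formula itself, since the factor $\Gamma(\nu)$ degenerates at $\nu = 0$ (the case $n=2$); in that borderline dimension the slower Bessel decay $(n-1)/2 = 1/2$ is genuinely insufficient for the triple-integral cancellation we are extracting.
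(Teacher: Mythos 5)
Your proposal is correct in substance but takes a genuinely different route from the paper at the key cancellation step. The paper puts the pole of the polar coordinates at the integration point $x_1$, applies Gegenbauer's addition formula only to the single factor $J(T|x_2-x_3|)$, and then has to extract cancellation by hand: the angular domains are the spherical caps $\A,\B$ rather than full spheres, so orthogonality is only approximate and is quantified via the antiderivative identity for Gegenbauer polynomials and Szeg\H{o}'s asymptotics (Proposition~\ref{prop:ultraspherical-mono}), after which the cross integrals $\int_0^{(1+w)rT}u\,J_{\nu}(u)J_{\nu+k}(u)\,du$ are estimated in three ranges (Proposition~\ref{prop:bessel-int-1var}). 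You instead expand all three kernels about the common center $z$; since the rescaled domain is a product of balls $|u_i|<R$ centered at the origin, every angular integration runs over a full sphere, exact orthogonality collapses the triple sum to the diagonal, and you get the manifestly nonnegative expansion $\sum_k \alpha_{k,n}\bigl(\int_0^R \rho\,J_{\nu+k}(\rho)^2\,d\rho\bigr)^3$ with $\alpha_{k,n}\asymp k^{n-2}$ (your coefficient count is right). This buys a cleaner, symmetric identity involving only squares of Bessel functions, at the modest price of justifying the interchange of the three infinite series with the integrals (routine, from the superexponential decay of $J_{\nu+k}(\rho)$ for $k\gg R$); your treatment of the Weyl-law and Euclidean reductions is the same as the paper's, except that the remainder is additive of size $O(T^{n-1})$, not a multiplicative $1+o(1)$, though it is absorbed the same way.

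One quantitative slip, which happens not to hurt you: at the turning point $|k-R|\lesssim R^{1/3}$ the radial integral is not of size $R^{4/3}$. In the exact formula there is cancellation, $J_{\mu}(R)^2-J_{\mu-1}(R)J_{\mu+1}(R)=O(R^{-4/3})$ in that regime (equivalently, directly, $\int_0^R\rho J_{\mu}(\rho)^2\,d\rho\lesssim R^{1/3}\cdot R\cdot R^{-2/3}=R^{2/3}$), so the turning-point range contributes only $O(R^{n+1/3})$ and the dominant contribution is the bulk $k\le R-R^{1/3}$, of size $\sum_k k^{n-2}\bigl(R^2-k^2\bigr)^{3/2}\asymp R^{n+2}$. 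Your cruder $R^{4/3}$ is still a valid upper bound and $R^{n+7/3}\le R^{3n/2+1}$ for $n\ge 3$, so the argument closes as written; the sharper accounting shows your route in fact gives $R^{n+2}$, which beats the stated exponent for $n\ge3$ and matches it at $n=2$, where Gegenbauer's formula would have to be replaced by Neumann's addition theorem since $\Gamma(\nu)$ degenerates at $\nu=0$.
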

In Section~\ref{sec:weyl}, we use the local Weyl law to approximate $K(x,y)$ and it is for this purpose that Theorem~\ref{thm:main} assumes $\eta \lesssim T^{1/2}$. 
In Section~\ref{sec:variance}, we prove Theorem~\ref{thm:2nd-mom}, and in particular give a lower bound on the denominator $\int_B \int_B K^2$, which is essentially the variance of $\int_B \phi^2$.
In Section~\ref{sec:euclid}, the hypothesis $\eta \lesssim r^{-2}T^{-1}$ comes into play as we approximate the triple integral in the numerator by a Euclidean version. For $n \geq 3$, we bound this integral from above in Sections~\ref{sec:gegenbauer} and \ref{sec:bessel}, completing the proof of Theorem~\ref{thm:3rd-mom} in Section~\ref{sec:complete} for dimensions $n \geq 3$. 
We prove the two-dimensional case of Theorem~\ref{thm:main} in Section~\ref{sec:s2} by a different argument: reducing to the case where $M$ is the round sphere.

For context, we recall Berry's Random Wave Model \cite{B}, which uses monochromatic random waves of the form (\ref{eqn:def-sum}) as a stand-in to make predictions about non-random eigenfunctions. This is expected to be a good approximation for chaotic systems. The model applies to high-frequency Laplace eigenfunctions on a manifold of negative curvature and suggests that, at an appropriate scale, they should be uniformly distributed. Related to this is the Quantum Unique Ergodicity conjecture of Rudnick and Sarnak \cite{RS} that on a negatively curved manifold $M$,
\begin{equation*}
\int_A \phi_{\lambda}^2 d\vol \rightarrow \vol(A)
\end{equation*}
for any fixed measurable subset $A$ of $M$ and any sequence of Laplace eigenfunctions $\phi_{\lambda}$ with growing eigenvalue $\lambda \rightarrow \infty$. 
This is one of our motivations for considering local integrals $\int_A \phi^2$, and the limit $\vol(A)$ corresponds to the expected value of $\int_A \phi^2$ when $\phi$ is taken at random.
The eigenvalue equation $(\Delta + \lambda)\phi = 0$ imparts a quantum interpretation, $|\phi|^2$ being the probability density of a single quantum particle in $M$. 

The quantum ergodicity theorem proved by Shnirelman \cite{Sh1,Sh2}, Colin de Verdi\`{e}re \cite{CdV}, and Zelditch \cite{Z} shows that negative curvature implies convergence along a full subsequence of eigenfunctions, or equivalently on average over the eigenfunctions, but allows many other subsequential limits besides the uniform measure.
Although it remains unknown whether the uniform measure is the only possibility,  work of Anantharaman \cite{A}, Anantharaman-Nonnenmacher \cite{AN}, Anantharaman-Silberman \cite{AS}, and Dyatlov-Jin \cite{DJ} places significant constraints on the measures that arise as quantum limits in general.
In examples of arithmetic origin, QUE has been proved in work of Lindenstrauss \cite{L1,L2}, and Bourgain-Lindenstrauss \cite{BouLi}, Jakobson \cite{J}, Holowinsky \cite{H}, Holowinsky-Soundararajan \cite{HS}.

The deterministic problem being so difficult, it is of interest to randomize the function $\phi$ and study the fluctuations in $\int_A \phi^2$. Our main result shows that for $A$ shrinking at a certain rate, one can expect central limit statistics. 

The central limit theorem controls deviations of size $``\sqrt{\var}"$ in the sense that
\[
\prob(|X - \E[X]| \geq y\sqrt{\var[X]}) \lesssim e^{-y^2/2}
\]
for any fixed $y$.
We also obtain the following concentration estimate at other scales.
\begin{theorem} \label{thm:tail}
If $y(rT)^{-1/6}$ is sufficiently small in terms of the implicit constants in the previous theorems, then
\[
\prob(|X - \E[X]| \geq y\sqrt{\var[X]}) \lesssim e^{-c y^2}
\]
where $c > 0$ is a numerical constant and $X = \int_B \phi^2$ as above.
In particular, the conclusion holds if $y(rT)^{-1/6} \rightarrow 0$ as $rT \rightarrow \infty$.
\end{theorem}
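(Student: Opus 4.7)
The plan is a Chernoff-type bound on the positive-semidefinite quadratic form supplied by Proposition~\ref{prop:quads}, in the spirit of Hanson--Wright. Writing $X = \mathfrak{z}^T A \mathfrak{z}$ and diagonalizing $A = Q\Lambda Q^T$ with eigenvalues $\lambda_i \geq 0$, the orthogonal invariance of $\mathfrak{z}$ lets me replace $Q^T \mathfrak{z}$ by an iid standard Gaussian vector, so that
\[
X - \E[X] = \sum_{i=1}^N \lambda_i(g_i^2 - 1).
\]

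Next I would control each factor of the moment generating function. The identity $\E[\exp(sG^2)] = (1-2s)^{-1/2}$ together with the Taylor expansion of $-\log(1-u)$ yields
\[
\log \E[\exp(s\lambda_i(g_i^2-1))] = -s\lambda_i - \tfrac{1}{2}\log(1-2s\lambda_i) \leq 2(s\lambda_i)^2
\]
for both signs of $s$, provided $|s\lambda_i| \leq 1/4$. Multiplying over $i$, writing $\|A\|_{\mathrm{op}} = \max_i \lambda_i$, and applying Markov's inequality to $\exp(\pm s(X-\E[X]))$ with the choice $s = t/(4\tr(A^2))$ (cut off at $1/(4\|A\|_{\mathrm{op}})$) yields the standard two-regime bound
\[
\prob(|X - \E[X]| \geq t) \lesssim \exp\!\left( -c\min\!\left\{ \frac{t^2}{\tr(A^2)},\, \frac{t}{\|A\|_{\mathrm{op}}} \right\} \right).
\]

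Setting $t = y\sqrt{\var[X]} = y\sqrt{2\tr(A^2)}$ turns the quadratic regime into $e^{-cy^2}$, so the only task is to verify that the linear regime does not take over for $y \lesssim (rT)^{1/6}$. For this I would use the crude but sufficient inequality $\|A\|_{\mathrm{op}}^3 \leq \sum_i \lambda_i^3 = \tr(A^3)$, combined with Theorems~\ref{thm:2nd-mom} and \ref{thm:3rd-mom}, which together give $\tr(A^3) \lesssim (rT)^{-1/2}\tr(A^2)^{3/2}$, and hence $\sqrt{\tr(A^2)}/\|A\|_{\mathrm{op}} \gtrsim (rT)^{1/6}$. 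The condition $t \leq \tr(A^2)/\|A\|_{\mathrm{op}}$ for remaining in the quadratic regime then reads exactly $y \lesssim (rT)^{1/6}$, matching the hypothesis of Theorem~\ref{thm:tail}. The main obstacle here is really one of sharpness rather than technique: the inequality $\|A\|_{\mathrm{op}} \leq \tr(A^3)^{1/3}$ is quite lossy, and any extension of this concentration estimate to larger $y$ would require a more direct bound on the operator norm of $A$, presumably via a spectral analysis of the Bessel structure of the kernel $K(x,y)$ exploited in Sections~\ref{sec:gegenbauer} and \ref{sec:bessel}.
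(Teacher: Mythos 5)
Your proposal is correct and follows essentially the same route as the paper: a Chernoff bound on the Gaussian quadratic form with $s \asymp t/\tr(A^2)$, whose validity (staying in the sub-Gaussian regime) is guaranteed by $\|A\|_{\mathrm{op}} \leq \tr(A^3)^{1/3}$ together with Theorems~\ref{thm:2nd-mom} and \ref{thm:3rd-mom}, yielding the same $(rT)^{1/6}$ threshold. The only difference is cosmetic: you package the truncated log-moment-generating-function estimate as the standard Hanson--Wright two-regime bound, whereas the paper optimizes the exact expression $-s\varepsilon - s\sum_j\lambda_j - \tfrac12\sum_j\log(1-2s\lambda_j)$ directly and truncates its Taylor expansion.
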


Han and Tacy proved deviation bounds in a related setting in equations (4.4), (4.5) of \cite{HT}, which was a motivation for the present work.

\section{Characteristic function of a quadratic form in Gaussians} \label{sec:quad}

In this section, we prove Proposition~\ref{prop:quads}.
Our quantity of interest $Z$ is a quadratic form in the random Gaussian coefficients $c_j$, and therefore has an explicit characteristic function. Indeed, expanding the square in $\phi^2$ shows that
\begin{equation}
 \int_B \phi^2 = \sum_j \sum_k c_j c_k  \int_B \phi_j \phi_k
\end{equation}
Write $c_j = \sqrt{\var[c]} \mathfrak{z}_j$, where $\mathfrak{z}_j$ is a centered Gaussian of unit variance and $\var[c]$ denotes the common variance $\var[c_j]$ among all $j$. 
Writing $\mathfrak{z}$ for the vector with entries $\mathfrak{z}_j$, we have 
\[
\int_B \phi^2 = \mathfrak{z}^T A \mathfrak{z}
\]
where the matrix $A$ has entries
\begin{equation}
A_{jk} = \var[c]  \int_B \phi_j \phi_k.
\end{equation}
The moment generating function can be computed explicitly by diagonalizing this symmetric matrix $A$. Write $A = U^T DU$, so that
\begin{equation}
 \mathfrak{z}^T A\mathfrak{z} = (U\mathfrak{z})^T D (U\mathfrak{z}) = \sum_j \lambda_j y_j^2
\end{equation}
where $\lambda_j$ are the eigenvalues of $A$.
For an orthogonal matrix $U$ and a standard Gaussian vector $\mathfrak{z}$, the vector $y = U\mathfrak{z}$ also has a standard Gaussian distribution. For $s \geq 0$ small enough that $1 - 2s\lambda_j > 0$ for all $j$, independence of the random variables $y_j$ implies that
\begin{align*}
\E\left[e^{s \mathfrak{z}^T A\mathfrak{z}}\right] &= \E\left[ e^{s \sum_j \lambda_j y_j^2} \right] = \prod_j \E\left[ e^{s \lambda_j y_j^2} \right]
\end{align*}
which is a product of Gaussian integrals. These can be evaluated as
\[
\E\left[ e^{s\lambda y^2} \right] = \int_{-\infty}^{\infty} e^{s\lambda y^2} e^{-y^2/2} \frac{dy}{\sqrt{2\pi}} = (1-2s\lambda)^{-1/2}.
\]

We summarize this as
\begin{proposition}
If $A$ is a symmetric matrix with eigenvalues $\lambda_j \geq 0$, then the moment generating function of the quadratic form $\mathfrak{z}^T A \mathfrak{z}$ in Gaussian random variables $\mathfrak{z}_j$ is
\begin{equation} \label{eqn:mgf}
\E\left[e^{s \mathfrak{z}^T A\mathfrak{z}}\right] = \prod_{j} \left(1 - 2s\lambda_j \right)^{-1/2}
\end{equation}
where both sides are defined for $s$ sufficiently small that $1-2s\lambda_{\max} > 0$, $\lambda_{\max}$ being the largest eigenvalue of $A$.
\end{proposition}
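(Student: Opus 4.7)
The plan is to organize the computation already carried out in the paragraphs preceding the statement and to verify that it is valid on the full range $0 \leq s < 1/(2\lambda_{\max})$.

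First, I would diagonalize $A = U^T D U$ with $U$ orthogonal and $D = \operatorname{diag}(\lambda_1, \ldots, \lambda_N)$, and set $y = U \mathfrak{z}$. Rotational invariance of the standard Gaussian measure on $\R^N$ ensures that $y$ is again a standard Gaussian vector, so that
\[
\mathfrak{z}^T A \mathfrak{z} \;=\; y^T D y \;=\; \sum_{j=1}^N \lambda_j y_j^2
\]
becomes a sum of independent random variables. Since the integrand $\exp(s \sum_j \lambda_j y_j^2)$ is non-negative, Tonelli's theorem lets me factor the expectation as
\[
\E\left[ e^{s \mathfrak{z}^T A \mathfrak{z}} \right] \;=\; \prod_{j=1}^N \E\left[ e^{s \lambda_j y_j^2} \right].
\]

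Each one-dimensional factor is a Gaussian integral that converges precisely when $1 - 2 s \lambda_j > 0$; completing the square in the exponent gives
\[
\E\left[ e^{s \lambda y^2} \right] \;=\; \int_{-\infty}^{\infty} e^{s \lambda y^2 - y^2/2} \frac{dy}{\sqrt{2\pi}} \;=\; (1 - 2 s \lambda)^{-1/2},
\]
choosing the positive branch of the square root. Under the hypothesis $s < 1/(2 \lambda_{\max})$ the convergence holds for every eigenvalue simultaneously, so multiplying the factors together produces the claimed product formula (\ref{eqn:mgf}).

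The only point requiring care is the range of validity: the derivation above handles real $s$ in the prescribed interval, but the application of (\ref{eqn:mgf}) to Theorem~\ref{thm:main} needs the identity at imaginary arguments $s = it$. This extension is not a substantive obstacle, since on the real segment both sides are real-analytic in $s$ and hence agree on the common domain of analyticity obtained by taking $s$ complex with $\Re(1 - 2 s \lambda_j) > 0$, that is, $\Re(s) < 1/(2\lambda_{\max})$; the imaginary axis lies inside this half-plane.
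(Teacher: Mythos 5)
Your proof is correct and follows essentially the same route as the paper: diagonalize $A$, use rotational invariance of the standard Gaussian to reduce to independent one-dimensional Gaussian integrals, and evaluate each as $(1-2s\lambda_j)^{-1/2}$. The extra remarks on Tonelli and on extending to complex $s$ with $\Re(s) < 1/(2\lambda_{\max})$ are sound and consistent with how the paper later justifies taking $s = it$.
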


Note that, by differentiating the moment generating function,
\begin{align*}
\E[\mathfrak{z}^T A \mathfrak{z} ] &= \tr(A) =  \sum_j \lambda_j \\
\var[\mathfrak{z}^T A\mathfrak{z} ] &= 2\tr(A^2) = 2\sum_j \lambda_j^2
\end{align*}
Indeed, let $X = \mathfrak{z}^T A\mathfrak{z}$ and differentiate with respect to $s$ starting from
\[
g(s) = \log \E \left[ e^{sX} \right] = \frac{1}{2} \sum_j -\log(1-2s\lambda_j)
\]
to
\[
\E[ X e^{sX} ] = \E[ e^{sX} ] \frac{1}{2} \sum_j \frac{2\lambda_j}{1- 2s\lambda_j}
\]
and then to
\[
\E[X^2 e^{sX} ] = \E[ X e^{sX} ] \frac{1}{2} \sum_j \frac{2\lambda_j}{1- 2s\lambda_j} + \E[e^{sX} ] \frac{1}{2} \sum_j \frac{ (2\lambda_j)^2}{(1-2s\lambda_j)^2}.
\]
Taking $s=0$ gives $\E[X] = \sum \lambda_j$ and then $\E[X^2] = (\E[X])^2 + 2\sum \lambda_j^2$. Thus, as claimed,
\begin{equation}
 \var[X] = \E[X^2] - \E[X]^2 = 2 \sum_j \lambda_j^2.
\end{equation}
Passing to the standardized random variable $Z$, write
\[
\sigma^2 = \var\left[\int_B \phi^2 \right]
\]
so that
\[
\E[e^{sZ}] = \E\left[\exp\left( (s/\sigma) \mathfrak{z}^T A\mathfrak{z} \right) \right] \exp\left(-(s/\sigma) \E[\mathfrak{z}^T A\mathfrak{z} ] \right).
\]
Taking logarithms,
\[
\log \E\left[ e^{sZ} \right] = \sum_j -\frac{1}{2} \log\left( 1 - 2 \frac{s}{\sigma} \lambda_j \right) - \frac{s}{\sigma} \sum \lambda_j .
\]
Expanding the logarithm $-\log(1-x) = \sum x^p/p$, the linear term cancels:
\begin{align*}
\log \E\left[ e^{sZ} \right] &= \frac{1}{2} \frac{4s^2}{2\sigma^2} \sum_j \lambda_j^2 + \frac{1}{2} \sum_{p=3}^{\infty} \left( \frac{\sum \lambda_j^p}{\sigma^p} \right) \frac{(2s)^p}{p} \\
&= \frac{s^2}{2} + \sum_{p=3}^{\infty} \left( \frac{\sum \lambda_j^p}{\sigma^p} \right) \frac{(2s)^p}{2p} .
\end{align*}
Now we take $s=it$ and standardize. Shifting and scaling and starting from
\begin{equation*}
\E\left[ e^{it\mathfrak{z}^T A\mathfrak{z}} \right] = \prod_{j=1}^{N} (1 - 2it\lambda_j)^{-1/2} = \exp\left(\sum_{j=1}^{N}-\frac{1}{2}\log(1-2it\lambda_j) \right)
\end{equation*}
we find that the characteristic function of the standardized quantity
\[
Z = \frac{ \mathfrak{z}^T A \mathfrak{z} - \E\left[ \mathfrak{z}^T A\mathfrak{z} \right] }{\var[\mathfrak{z}^T A\mathfrak{z} ]^{1/2} }
\]
is
\begin{equation} \label{eqn:charf}
\E[ e^{itZ} ] = \exp\left(\sum_{p=2}^{\infty} \tr(A^p) \frac{2^{p-1}i^p t^p}{p \sigma^p}\right) = e^{-t^2 /2}\exp\left(\sum_{p=3}^{\infty} \frac{ \tr(A^p) }{\sigma^p} \frac{2^{p-1}i^p t^p}{p} \right).
\end{equation}
The factor $e^{-t^2/2}$ is the characteristic function of a standard Gaussian. To show that $Z$ converges to a Gaussian, we will estimate the traces $\tr(A^p)$ and show that they are negligible compared to $\sigma^p = (2\tr(A^2))^{p/2}$.

We complete the proof of Proposition~\ref{prop:quads} by using the third moment to bound the higher ones. 
Since $p$-norms are monotone, we have an upper bound
\[
\tr(A^p) = \sum \lambda_j^p \leq \left( \sum \lambda_j^3 \right)^{p/3} = \tr(A^3)^{p/3}.
\]
Therefore, summing a geometric series,
\[
\left| \sum_{p=3}^{\infty} \tr(A^p) \frac{2^{p-1} i^p t^p}{p\sigma^p} \right| \leq \sum_{p=3}^{\infty} \left( \frac{ 2|t|\tr\big( A^3 \big)^{1/3} }{\big(2 \tr(A^2) \big)^{1/2}}\right)^p \lesssim |t|^3 \frac{ \tr(A^3) }{\tr(A^2)^{3/2}}
\]
where the series converges for any fixed $t$ as long as
\[
\frac{\tr(A^3)^{1/3}}{\tr(A^2)^{1/2} } \rightarrow 0.
\]
Here, as in the rest of the paper, we write $\lesssim$ to denote inequality up to a constant, and $\rightarrow$ refers to limits where $rT \rightarrow \infty$ as described in the Introduction.
Thus an upper bound on the third moment and a lower bound on the second are enough to control all others.
Combining this with (\ref{eqn:charf}), we obtain the following estimate for the characteristic function, as stated in the Introduction.
\newtheorem*{prop:quads}{Proposition \ref{prop:quads}}
\begin{prop:quads}
Given a sequence of symmetric, positive definite $N \times N$ matrices $A = A_N$, let $Z$ be the standardized quadratic form
\[
Z = \frac{ \mathfrak{z}^T A \mathfrak{z} - \E\left[ \mathfrak{z}^T A \mathfrak{z} \right] }{\sqrt{\var\left[ \mathfrak{z}^T A \mathfrak{z} \right] } }.
\]
If
\[
\frac{\tr(A^3)}{\tr(A^2)^{3/2} } \rightarrow 0 \quad \text{as} \ N \rightarrow \infty
\]
then the characteristic function obeys
\[
\E\left[ e^{itZ} \right] = e^{-t^2/2} \left( 1 + O\left( |t|^3 \frac{ \tr(A^3) }{\tr(A^2)^{3/2}} \right) \right).
\]
\end{prop:quads}
The rest of our work is to bound the second and third moments.

\section{Moments and the two-point function} \label{sec:moments-K}

We now show how Proposition~\ref{prop:kmom} is a special case of Proposition~\ref{prop:quads}.
For any quadratic form $\mathfrak{z}^T A\mathfrak{z}$ and any $p \geq 2$, we have
\[
\sum_j \lambda_j^p = \tr(A^p) = \sum_k A^{(p)}_{kk} = \sum_k \sum_{j_1} \cdots \sum_{j_{p-1}} A_{kj_1} A_{j_1 j_2} \cdots A_{j_{p-1} k}
\]
but for the particular one $\int_B \phi^2$, we can express the traces in terms of the kernel
\[
K(x,x') = \sum_j \phi_j(x)\phi_j(x').
\]
The matrix entries are $A_{jk} = \var[c]  \int_B \phi_j \phi_k$. By writing the product of $p$ integrals over $B$ as a single integral over $B^p$, we obtain
\begin{align*}
& \int_B \phi_k(x_1) \phi_{j_1}(x_1) dx_1 \int_B \phi_{j_1}(x_2)\phi_{j_2}(x_2) dx_2 \cdots \int_B \phi_{j_{p-1}}(x_p) \phi_k(x_p) dx_p \\
&= \int_{B^p} \phi_k(x_1)\phi_{j_1}(x_1) \phi_{j_1}(x_2)\phi_{j_2}(x_2) \cdots \phi_{j_{p-1}}(x_p) \phi_k(x_p) dx_1 \ldots dx_p
\end{align*}
When we take the (finite) sum $\sum_k \sum_{j_1} \cdots \sum_{j_{p-1}}$ under the integral, it factors into $p$ copies of the two-point function:
\begin{align*}
&= \sum_k \sum_{j_1} \cdots \sum_{j_{p-1}} \int_{B^p} \phi_k(x_1)\phi_{j_1}(x_1) \phi_{j_1}(x_2)\phi_{j_2}(x_2) \cdots \phi_{j_{p-1}}(x_p) \phi_k(x_p) dx_1 \ldots dx_p \\
&= \int_{B^p} \sum_k \phi_k(x_1)\phi_k(x_p) \sum_{j_1} \phi_{j_1}(x_1)\phi_{j_1}(x_2) \cdots \sum_{j_{p-1}} \phi_{j_{p-1}}(x_{p-1}) \phi_{j_{p-1}}(x_{p}) dx_1 \ldots dx_p \\
&= \int_{B^p} K(x_1,x_p) K(x_1,x_2) \cdots K(x_{p-1},x_p)  dx_1 \ldots dx_p \\
&= \int_{B^p} \prod_{i \ \text{mod} \ p } K(x_i, x_{i+1}) \ dx_1 \ldots dx_p
\end{align*}
With the extra factor of $\var[c]$ from the entries $A_{jk}$, the trace becomes
\begin{equation} \label{eq:p-moment}
\sum_j \lambda_j^p = \var[c]^p \int_B \prod_{i \bmod p} K(x_i, x_{i+1}) \ dx_1 \ldots dx_p
\end{equation}

In particular,
\[
\frac{\tr(A^3)}{\tr(A^2)^{3/2}} = \frac{ \int_B \int_B \int_B K(x_1,x_2)K(x_2,x_3)K(x_3,x_1) dx_3 dx_2 dx_1 }{\left( \int_B \int_B K(x,x')^2 dx dx' \right)^{3/2}}
\]
We summarize this discussion in the following restatement of Proposition~\ref{prop:kmom}, which may be helpful in other examples besides the monochromatic ensemble.
\begin{proposition}
If $\phi = \sum_j c_j \phi_j$ is a Gaussian random function with two-point function
satisfying
\[
\frac{ \int_B \int_B \int_B K(x_1,x_2)K(x_2,x_3)K(x_3,x_1) dx_3 dx_2 dx_1 }{\left( \int_B \int_B K(x,x')^2 dx dx' \right)^{3/2}} \rightarrow 0
\]
as the ball $B$ shrinks, then the local integral $\int_B \phi^2$ obeys the Central Limit Theorem.
\end{proposition}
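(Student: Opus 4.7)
The plan is to derive this restated proposition directly from Proposition~\ref{prop:quads} by realizing $\int_B \phi^2$ as a quadratic form in the standardized Gaussian coefficients $\mathfrak{z}_j$ and translating the abstract trace ratio $\tr(A^3)/\tr(A^2)^{3/2}$ into the kernel ratio stated in the hypothesis. The engine is a cyclic trace identity expressing $\tr(A^p)$ as an iterated integral of $p$ copies of $K$.

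First I would set up the matrix. Expanding $\phi^2$ and writing $c_j = \sqrt{\var[c]}\,\mathfrak{z}_j$ puts $\int_B \phi^2$ in the form $\mathfrak{z}^T A \mathfrak{z}$ with $A_{jk} = \var[c] \int_B \phi_j \phi_k$, which is symmetric and positive semidefinite because $\int_B (\sum_j u_j \phi_j)^2 \ge 0$ for every vector $u$. The moment-generating-function derivation in Section~\ref{sec:quad} only uses $\lambda_j \ge 0$, so Proposition~\ref{prop:quads} applies here even without strict positivity. Next I would establish the trace identity: writing a product $A_{k j_1} A_{j_1 j_2} \cdots A_{j_{p-1} k}$ as a single integral over $B^p$ and pulling the finite sums $\sum_k \sum_{j_1} \cdots \sum_{j_{p-1}}$ inside the integral, each inner sum $\sum_{j_\ell} \phi_{j_\ell}(x_\ell) \phi_{j_\ell}(x_{\ell+1})$ collapses to $K(x_\ell, x_{\ell+1})$, yielding
\[
\tr(A^p) = \var[c]^p \int_{B^p} \prod_{i \bmod p} K(x_i, x_{i+1})\, dx_1 \cdots dx_p.
\]
Specializing to $p = 2$ gives $\tr(A^2) = \var[c]^2 \int_B \int_B K(x,x')^2\, dx\, dx'$ and to $p = 3$ gives $\tr(A^3) = \var[c]^3 \int_{B^3} K(x_1,x_2) K(x_2,x_3) K(x_3,x_1)\, dx_1\, dx_2\, dx_3$. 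Crucially, the factors of $\var[c]^p$ cancel in the dimensionless ratio $\tr(A^3)/\tr(A^2)^{3/2}$, so the hypothesis of Proposition~\ref{prop:kmom} is \emph{exactly} the hypothesis of Proposition~\ref{prop:quads} with this choice of $A$.

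Having identified the two ratios, I would invoke Proposition~\ref{prop:quads} to obtain the characteristic-function estimate $\E[e^{itZ}] = e^{-t^2/2}(1 + O(|t|^3 \tr(A^3)/\tr(A^2)^{3/2}))$, which translates back to the stated error in terms of the kernel integrals and yields convergence in distribution of $Z$ to $N(0,1)$, i.e. the Central Limit Theorem.

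The main obstacle is not analytic but organizational: one must confirm that the interchange of the finite sums $\sum_k \sum_{j_1} \cdots \sum_{j_{p-1}}$ with the integration over $B^p$ is legitimate (immediate, as all sums are finite and $B$ is compact so $K$ is bounded on $B \times B$), and that the positive semidefinite case does not break the spectral computation of Section~\ref{sec:quad} (it does not). All genuine analytic difficulty is pushed downstream into Theorems~\ref{thm:2nd-mom} and \ref{thm:3rd-mom}, which actually estimate the two kernel integrals and verify the smallness condition at the rate $(rT)^{-1/2}$.
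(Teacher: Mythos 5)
Your proposal is correct and follows essentially the same route as the paper: Section~\ref{sec:moments-K} derives exactly the cyclic trace identity $\tr(A^p) = \var[c]^p \int_{B^p} \prod_{i \bmod p} K(x_i,x_{i+1})\,dx_1\cdots dx_p$ by pulling the finite sums under the integral, notes that the $\var[c]$ factors cancel in the ratio, and then invokes Proposition~\ref{prop:quads}. Your added remark that positive semidefiniteness (rather than strict definiteness) suffices for the spectral computation of Section~\ref{sec:quad} is a harmless refinement the paper leaves implicit.
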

In our example, we will use semiclassics to estimate $K(x,x')$ and verify the assumption that this ratio of integrals tends to 0 as $rT \rightarrow \infty$. The lower bound on the denominator is provided by
\newtheorem*{thm:2nd-mom}{Theorem \ref{thm:2nd-mom}}
\begin{thm:2nd-mom}
\[
\int_B \int_B K(x,x')^2 dxdx' \asymp (rT)^{-n+1} \big( \vol(B) T^{n-1} \eta \big)^2
\]
\end{thm:2nd-mom}
while the upper bound on the numerator is
\newtheorem*{thm:3rd-mom}{Theorem \ref{thm:3rd-mom}}
\begin{thm:3rd-mom}
\[
\int_B \int_B \int_B K(x_1,x_2)K(x_2,x_3)K(x_3,x_1) dx_3 dx_2 dx_1 \lesssim (rT)^{-3n/2+1} \big( \vol(B) T^{n-1} \eta \big)^3.
\]
\end{thm:3rd-mom}

\section{Weyl's law} \label{sec:weyl}

The local Weyl law is an estimate for $K(x,x')$ showing that it resembles a Bessel function at the scale $1/T$.
Such results were obtained by H\"{o}rmander in the regime $d(x,y) \lesssim T^{-1}$, but we are interested in distances $d(x,y)$ shrinking arbitrarily slowly as $T \rightarrow \infty$. In this situation, we have the following by work of Canzani and Hanin.
\begin{theorem} \label{thm:canhan} (Canzani-Hanin)
Let $M$ be a compact manifold of dimension $n \geq 2$, with smooth Riemannian metric $g$ and corresponding Laplace eigenfunctions obeying $(\Delta + t_j^2) \phi_j = 0$. Fix any point $x_0 \in M$. For a large parameter $T$, let $B = B_r(x_0)$ where $r = r(T) \rightarrow 0$ arbitrarily slowly as $T \rightarrow \infty$. Then the spectral function
\[
E_T(x,y) = \sum_{t_j \leq T} \phi_j(x) \phi_j(y)
\]
can be written
\[
E_T(x,y) = \left( \frac{T}{2\pi} \right)^n \int_{|\xi|_{g_y}<1} e^{i T \langle \exp_y^{-1}(x), \xi \rangle_{g_y} } \frac{d\xi}{\sqrt{|g_y|}} + R_T(x,y)
\]
where the remainder satisfies
\[
\sup_{x,y \in B} |R_T(x,y)| \lesssim T^{n-1}.
\]
\end{theorem}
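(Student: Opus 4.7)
The natural strategy is the Fourier--Tauberian method applied to the half-wave propagator $U(\tau) = e^{-i\tau\sqrt{-\Delta}}$, whose Schwartz kernel is $U(\tau,x,y) = \sum_j e^{-i\tau t_j}\phi_j(x)\phi_j(y)$. The plan is: (i) use a parametrix to compute $U(\tau,x,y)$ for $\tau$ in a small neighborhood of zero; (ii) extract a smoothed version of $E_T$ by integrating $U$ against a test function on the Fourier side; (iii) pass from the smoothed version to the sharp spectral function via a one-sided Fourier--Tauberian theorem. Steps (i)--(iii) are essentially those of H\"ormander; the only new ingredient over the classical $d(x,y)\lesssim T^{-1}$ regime is uniformity as $x,y$ range over $B_r(x_0)$ with $r\to 0$.

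For step (i), fix an even Schwartz function $\rho$ with $\rho(0)=1$ and $\hat\rho$ supported in $(-\varepsilon,\varepsilon)$ for $\varepsilon$ less than the injectivity radius at $x_0$. The smoothed spectral function is
\[
(\rho * dE)(T;x,y) = \frac{1}{2\pi}\int_\R e^{iT\tau}\hat\rho(\tau)\, U(\tau,x,y)\, d\tau.
\]
On $\supp(\hat\rho)$ the Lax--H\"ormander parametrix represents $U(\tau,x,y)$ as an oscillatory integral with phase $\langle \exp_y^{-1}(x),\xi\rangle_{g_y} - \tau|\xi|_{g_y}$ and amplitude $a(x,y,\xi,\tau)$ whose principal part is $|g_y|^{-1/2}$. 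Performing the $\tau$-integral produces a factor $\rho(T-|\xi|_{g_y})$, and rescaling $\xi \mapsto T\xi$ yields
\[
\left(\frac{T}{2\pi}\right)^n \int_{|\xi|_{g_y}<1} e^{iT\langle \exp_y^{-1}(x),\xi\rangle_{g_y}} \frac{d\xi}{\sqrt{|g_y|}}
\]
as the principal contribution to $(\rho * dE)(T;x,y)$, with subprincipal symbol terms producing an error of size $O(T^{n-2})$.

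For step (iii), a one-sided Tauberian theorem converts this smoothed density into a bound for the sharp counting function $E_T(x,y)$: if the derivative of the smoothed density is uniformly $O(T^{n-1})$, then the smoothed and sharp counts differ by $O(T^{n-1})$. The requisite derivative bound follows from step (ii) via a stationary-phase analysis of the oscillatory integral above, and this $O(T^{n-1})$ discrepancy is exactly what must be absorbed into the remainder $R_T(x,y)$.

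The main obstacle is ensuring uniformity of all constants as $x,y$ range over $B_r(x_0)$ with $r\to 0$ at an arbitrarily slow rate. The parametrix construction, the stationary-phase estimate, and the Tauberian argument each introduce constants that a priori depend on $(x,y)$. Because the exponential map, the metric determinant, the phase, and the symbol all depend smoothly on the base point, fixing normal coordinates around $x_0$ on a chart of fixed radius (larger than any eventually small $r$) makes these constants depend only on the geometry in a fixed neighborhood of $x_0$, not on $r$ itself. Once this uniformity is in place, the pointwise Tauberian bound yields $\sup_{x,y \in B}|R_T(x,y)| \lesssim T^{n-1}$ with no deterioration as $r \to 0$, which is the claim.
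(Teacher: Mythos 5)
Your outline (half-wave propagator, smoothing against $\hat\rho$, one-sided Fourier--Tauberian argument) is the same general route taken by Canzani and Hanin; note that the paper itself does not reprove this statement but quotes Theorem 2 of \cite{CH} and explains that, when the non-self-focal hypothesis on $x_0$ is dropped, their Propositions 10, 11 and 18 together with H\"ormander's on-diagonal bound still yield a remainder $O(T^{n-1})$ uniformly on $B_r(x_0)$. So a from-scratch proof along your lines is in principle the right kind of argument.

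The genuine gap is in your final paragraph, which locates the whole difficulty in ``uniformity of constants in the base point'' and disposes of it by smooth dependence of the metric data on $x_0$ within a fixed chart. That is not where the difficulty lies. The delicate regime is $T\,d(x,y)\to\infty$ while $d(x,y)\to 0$ arbitrarily slowly: the classical Lax--H\"ormander parametrix uses an adapted phase $\psi(x,y,\xi)$ solving the eikonal equation, and the classical remainder analysis is uniform only for $d(x,y)=O(1/T)$ --- exactly the limitation the paper points out. Writing the parametrix with the linear phase $\langle\exp_y^{-1}(x),\xi\rangle_{g_y}-\tau|\xi|_{g_y}$, as you do in step (i), is not the standard construction but precisely the nontrivial contribution of \cite{CH}: this phase does not solve the eikonal equation away from the diagonal, one cannot simply trade it for the adapted phase (the phase discrepancy is of order $T\,d(x,y)^2$, which is unbounded for slowly shrinking $r$), and the resulting amplitude/transport corrections must be estimated uniformly in the unbounded parameter $T\,d(x,y)$ --- this is the content of their Proposition 18, with Propositions 10 and 11 controlling the off-diagonal behavior used in the Tauberian step. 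Your claim that the subprincipal terms contribute only $O(T^{n-2})$ in this regime is likewise asserted rather than proved (Canzani--Hanin obtain $o(T^{n-1})$, and only at non-self-focal points). As written, the proposal presupposes the key estimate instead of proving it; to close the gap you must either carry out the uniform analysis of the oscillatory integral with the globally defined linear phase, or do as the paper does and invoke \cite{CH} directly, checking only that the error degrades from $o(T^{n-1})$ to $O(T^{n-1})$ without the non-self-focal assumption.
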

H\"{o}rmander had already proved this with a remainder at most $O(T^{n-1})$ for $d(x,y)$ of order $1/T$.
In Theorem 2 from \cite{CH}, Canzani and Hanin improved this in two ways: allowing $d(x,y)$ to shrink more slowly than $1/T$ and, assuming $x_0$ is not self-focal, achieving an error term $o(T^{n-1})$ instead of $O(T^{n-1})$.
Without the assumption on $x_0$, their proof still gives $O(T^{n-1})$ for $d(x,y)$ shrinking arbitrarily slowly, as we proceed to sketch.
Proposition 18 in \cite{CH} is the main technical estimate, and it allows $x_0$ to be self-focal. 
The role of the non-self-focal assumption is to guarantee the estimate (43) in \cite{CH}, which says roughly that there is a constant $c>0$ such that when $x=y$, the remainder is at most $c\varepsilon T^{n-1} + O_{\varepsilon}(T^{n-2})$ for any $\varepsilon > 0$. Without the (non)focal assumption, one still knows by H\"ormander's work that the remainder is $O(T^{n-1})$ on the diagonal and the same arguments from \cite{CH} (in particular Propositions 10 and 11) imply that $R(x,y)$ remains $O(T^{n-1})$ for distances $d(x,y)$ shrinking arbitrarily slowly.

The phase $\langle \exp_y^{-1}(x), \xi \rangle_{g_y}$ is an important aspect of \cite{CH}. It is defined as long as $d(x,y)$ is less than the injectivity radius of $M$, that is, even for distances of order 1. 
In contrast, H\"{o}rmander's ``adapted" phase functions $\psi(x,y,\xi)$ are obtained from the eikonal equation, a differential equation which is not guaranteed to have solutions when $Td(x,y)$ is unbounded.

For a window of frequencies $T-\eta < t_j \leq T$, the kernel is
\[
K(x,y) = E_T(x,y) - E_{T-\eta}(x,y).
\]
The remainders $R_{T-\eta} \lesssim (T-\eta)^{n-1}$ and $R_T \lesssim T^{n-1}$ are both $O(T^{n-1})$.
Therefore Theorem~\ref{thm:canhan} (Theorem 2 from \cite{CH}) implies an estimate for $K(x,y)$. Canzani-Hanin state the result as Theorem 3 in \cite{CH}, but again they are interested in obtaining an improved remainder $o(T^{n-1})$ in the most crisp regime $\eta = 1$ under the assumption that $x,y$ are close to a non-self-focal point $x_0$. In the present article, we avoid making this assumption at the price of taking a slowly growing window $\eta \rightarrow \infty$. 
Rescaling the integral for $E_{T-\eta}$ by $\xi \mapsto T\xi/(T-\eta)$, we have
\[
 \int_{|\xi|<1} e^{i (T-\eta)\langle \exp_y^{-1}(x),\xi \rangle} \frac{d\xi}{\sqrt{|g_y|}} = \left(1-\frac{\eta}{T}\right)^{-n} \int_{|\xi|< 1 - \frac{\eta}{T}} e^{i T \langle \exp_y^{-1}(x),\xi \rangle} \frac{d\xi}{\sqrt{|g_y|}}
\]
where we have suppressed the subscripts with the understanding that $|\xi|$ and $\langle \exp_y^{-1}(x),\xi \rangle$ are taken with respect to the metric at $y$.
With the multiplicative factor $(\frac{T-\eta}{2\pi})^n$, 
\[
E_{T-\eta}(x,y) = \left( \frac{T}{2\pi} \right)^n \int_{|\xi| < 1- \frac{\eta}{T} } e^{iT \langle \exp_y^{-1}(x),\xi \rangle} \frac{d\xi}{\sqrt{|g_y|}} + O\big( T^{n-1} \big).
\]
It follows that
\[
K(x,y) = \left( \frac{T}{2\pi}\right)^n \int_{1-\frac{\eta}{T}< |\xi|_{g_y}< 1} e^{i T \langle \exp_y^{-1}(x), \xi \rangle_{g_y} } \frac{d\xi}{\sqrt{|g_y|}} + O\big(T^{n-1}\big)
\]
We assume $\eta/T \rightarrow 0$, so the integration over the thin annulus $1 - \eta/T < |\xi | < 1$ can be appproximated by an integral over the sphere $|\xi|=1$ times the width $\eta/T$. 
In polar coordinates at $y$, $g_y$ is the identity matrix. Identifying $x$ and $y$ with the vectors in $\R^n$ representing them in coordinates,
\[
\langle \exp_y^{-1}(x), \xi \rangle = (x-y) \cdot \xi + O\left( d(x,y)^2|\xi| \right).
\]
Finally, the Bessel function appears because of the integral formula
\[
\int_{|\xi|=1} e^{i (x-y) \cdot \xi } d\xi = (2\pi)^{n/2} \frac{J_{n/2-1}(|\xi|)}{|\xi|^{n/2-1}}.
\]
We summarize this as follows,
writing $d(x,y)$ for the distance between $x$ and $y$.
\begin{proposition} \label{prop:hormander}
For any compact manifold $M$ and radius $r = r(T) \rightarrow 0$ as $T \rightarrow \infty$, the two-point function for $d(x,y) < r$ is given approximately by
\[
K(x,y) =  \frac{N}{\vol(M)} \left( 2^{n/2-1}\Gamma(n/2) \frac{J_{n/2-1}(Td(x,y)) }{T^{n/2-1}d(x,y)^{n/2-1}} + O(T^{-1}) \right)
\]
where $N$
is the number of frequencies $t_j$ in the interval $T-\eta < t_j \leq T$.
We have
\[
N = c_n T^{n-1} \eta \left( 1 + O\big(T^{-1}\big) \right)
\]
for a positive constant $c_n > 0$ depending only on the dimension.
\end{proposition}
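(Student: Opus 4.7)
The plan is to derive the formula directly from Theorem~\ref{thm:canhan} by writing $K(x,y) = E_T(x,y) - E_{T-\eta}(x,y)$, reducing the resulting annular oscillatory integral to a spherical one, and identifying constants through Weyl's law.

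First I would apply Theorem~\ref{thm:canhan} to each of $E_T$ and $E_{T-\eta}$, each with a remainder of size $O(T^{n-1})$. To put the main terms in a common form I rescale the integration variable in $E_{T-\eta}$ by $\xi \mapsto (T-\eta)\xi/T$, so that both expressions share the prefactor $(T/2\pi)^n$ and the phase $e^{iT\langle\exp_y^{-1}(x),\xi\rangle_{g_y}}$; the Jacobian $(1-\eta/T)^{-n} = 1 + O(\eta/T)$ contributes only a lower-order correction since $\eta/T \to 0$. Subtraction leaves an integral over the thin annulus $1-\eta/T < |\xi|_{g_y} < 1$ together with an additive $O(T^{n-1})$ error.

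In geodesic normal coordinates at $y$, where $\sqrt{|g_y|}=1$, I write $\xi = \rho\omega$ with $\omega \in S^{n-1}$ and $\rho \in (1-\eta/T,\,1]$. Freezing $\rho = 1$ in the phase and integrating $\rho^{n-1}\,d\rho$ over the shell produces the width factor $\eta/T$; the phase error incurred by this freezing is $O(\eta d(x,y)) = O(\eta r)$, which is small since $\eta r^2 T = O(1)$ and $rT \to \infty$ together force $\eta r \lesssim 1/(rT) \to 0$. The expansion $\langle \exp_y^{-1}(x),\omega\rangle = (x-y)\cdot\omega + O(d(x,y)^2)$ introduces an additional phase error $O(Tr^2)$, again controlled by the same hypothesis. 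The remaining spherical integral then evaluates, via the classical Fourier transform of the surface measure on $S^{n-1}$,
\[
\int_{S^{n-1}} e^{iT(x-y)\cdot\omega}\,d\omega = (2\pi)^{n/2}\frac{J_{n/2-1}(Td(x,y))}{(Td(x,y))^{n/2-1}},
\]
to the Bessel function appearing in the statement.

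The final step is collecting constants: the leading factor $(T/2\pi)^n \cdot (\eta/T) \cdot (2\pi)^{n/2}$ matches $(N/\vol(M)) \cdot 2^{n/2-1}\Gamma(n/2)$ once $N$ is identified by the Weyl count $N = c_n T^{n-1}\eta\bigl(1 + O(T^{-1})\bigr)$, which follows by specializing Theorem~\ref{thm:canhan} to $x = y$ (where the oscillatory integral reduces to the volume of the unit ball in $\R^n$ divided by $\sqrt{|g_y|}$), integrating in $x$ over $M$, and differencing between $T$ and $T-\eta$. The main obstacle is the bookkeeping of several competing error sources: the Canzani--Hanin remainders (of relative size $O(1/\eta)$ against the main term $N/\vol(M) \asymp T^{n-1}\eta$), the thin-shell and Jacobian corrections of size $O(\eta/T)$, and the normal-coordinate error $O(Tr^2)$. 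Verifying that these combine into the uniform $O(T^{-1})$ error claimed requires systematic use of the hypotheses $\eta \lesssim T^{1/2}$ and $\eta r^2 T = O(1)$ inherited from Theorem~\ref{thm:main}, together with the asymptotic decay $J_{n/2-1}(Td(x,y))/(Td(x,y))^{n/2-1} \asymp (Td(x,y))^{-(n-1)/2}$ of the Bessel main term when comparing relative sizes.
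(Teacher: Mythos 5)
Your proposal follows essentially the same route as the paper: apply the Canzani--Hanin theorem to $E_T$ and $E_{T-\eta}$, rescale so both share the phase $e^{iT\langle \exp_y^{-1}(x),\xi\rangle}$ and reduce to the thin annulus $1-\eta/T<|\xi|<1$, replace the annulus by the unit sphere times the width $\eta/T$, expand the phase in normal coordinates, invoke the Fourier transform of the sphere to produce the Bessel factor, and read off $N$ from the diagonal $x=y$ via Weyl's law --- exactly the paper's derivation, with matching constants. Your closing bookkeeping of the competing error sources (including the observation that the Canzani--Hanin remainder is only $O(1/\eta)$ relative to the main term, which is the form the paper actually uses in Section~\ref{sec:variance}) is at least as careful as the paper's own informal treatment, so there is no gap to report.
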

We will often use Proposition~\ref{prop:hormander} as an upper bound. The Bessel term is bounded, so we always have $K(x,y) \lesssim N \lesssim T^{n-1}\eta$. This can be improved when $d(x,y)$ is larger than $1/T$ since $J_{n/2-1}(u)$ decays as $u^{-1/2}$:
\begin{equation}
K(x,y) \lesssim T^{n/2-1/2}d(x,y)^{-n/2+1/2} \eta.
\end{equation}

To explain the normalization in Proposition~\ref{prop:hormander}, we review the local and global versions of Weyl's law. Weyl's law for counting eigenvalues can be deduced from the local version by integrating on the diagonal $x=y$.
Indeed, for orthogonal eigenfunctions $\phi_j$, we have
\[
\int_M K(x,x) dx = \sum_j 1 = N
\]
This leads to the usual Weyl's law that the number of eigenvalues in our window is
\[
N = \# \{j ; t_j \in [T - \eta(T), T] \} =\frac{|B_d| \vol(M)}{ (2\pi)^{n}} (T^{n} - (T-\eta(T))^{n}) + O(T^{n-1})
\]
where $|B_n|$ is the volume of the Euclidean unit ball in dimension $n$. Recall that we write the Laplace eigenvalue as $t_j^2$ (largely to avoid confusion with the eigenvalues of $A$, already called $\lambda_j$).
From the binomial expansion,
\begin{align*}
T^{n} - (T-\eta(T))^n &= T^{n} - (T^n - nT^{n-1}\eta(T) + \cdots )  \\
&= nT^{n-1}\eta(T) + O\big(T^{n-2}\eta(T)^2 \big) 
\end{align*}
If we choose $\eta(T) = T^{1/2}$ or smaller, then $T^{n-2}\eta(T)^2 \leq T^{n-1}$ and we can absorb the higher order terms into the error $O(T^{n-1})$ already present in Weyl's law. We need $T^{n-1}\eta(T)$ to be larger than $T^{n-1}$ in order for the main term to dominate. Thus for $\eta(T)$ diverging no faster than $T^{1/2}$, we have
\begin{equation}
\# \{j ; t_j \in [T - \eta(T) , T] \} = \frac{\vol(M) |B_n|}{(2\pi)^n} n T^{n-1} \eta(T) + O(T^{n-1})
\end{equation}
The variance of the coefficients should be inversely proportional to this in order for $\phi$ to be normalized in $L^2$ (on average). Indeed, for orthonormal eigenfunctions $\phi_j$, we have
\[
1 = \E\left[ \int_M \phi^2 \right] = \E\left[ \sum_j \sum_k c_j c_k \int_M \phi_j \phi_k \right] = \E\left[ \sum_j c_j^2 \right] =  \var[c]N.
\]

It is also of interest to study windows $\eta$ even larger than $T^{1/2}$. In this case, the error $O(T^{n-2}\eta^2)$ from the binomial expansion overwhelms the error $O(T^{n-1})$ from Weyl's law. If $\eta = (1- \alpha) T$ is proportional to $T$ instead of only $o(T)$, then there is no need to use the binomial formula and we instead have 
\[
N \sim \frac{|B_n| \vol(M)}{ (2\pi)^{n}} (1 - \alpha^n) T^n + O(T^{n-1}).
\]
This is the case of \emph{band-limited} random functions, synthesized from a band of frequencies between $(1-\alpha)T$ and $T$.

\section{Proof of Theorem~\ref{thm:2nd-mom}} \label{sec:variance}

We have Weyl's law
\[
K(x,x') = c_n \left( T^n - (T-\eta)^n \right) \left( \frac{J_{n/2-1}(Td(x,x'))}{(Td(x,x'))^{n/2-1}} + O\left( \frac{T^{n-1}}{T^n - (T-\eta)^n } \right) \right)
\]
and hence
\[
K(x,x')^2 \gtrsim \left(T^n - (T-\eta)^n\right)^2 J(x,x')^2 - O\left(T^{n-1} (T^n - (T-\eta)^n )\right).
\]
We write $J$ as a shorthand for the Bessel function 
\begin{equation} \label{eqn:jabbrev}
J = J(x,x') = J_{n/2-1}(Td(x,x'))/(Td(x,x'))^{n/2-1}.
\end{equation}
We integrate over $B \times B$ to get
\[
\int_B \int_B K(x,x')^2 dx' dx \gtrsim \left(T^n - (T-\eta)^n \right)^2 \int_B \int_B J(x,x')^2 dx' dx - O\left(\vol(B)^2 T^{2(n-1)} \eta \right)
\]
By the triangle inequality, the ball $B_r(z)$ contains a ball $B_{r-\xi}(x)$ with smaller radius and center shifted to $x$, where $\xi = d(x,z)$. The integrand $J^2$ is nonnegative, so we have a lower bound by integrating only over the smaller ball. Shifting to polar coordinates with respect to $x$, we have
\[
\int_{B_{r-\xi}(x)} J^2 \gtrsim \int_0^{r-\xi} J^2(\rho) \rho^{n-1} d\rho.
\]
Indeed, the volume form in normal coordinates is given by
\begin{equation}
d\vol(x) = (1 + O(\rho^2) ) \rho^{n-1} d\rho d\omega
\end{equation}
since the volume form is obtained from the metric $g$ by $\sqrt{\det(g)}$ and we have the expansion
\begin{equation*}
\sqrt{\det(g)} = 1 - \frac{1}{6} \text{Ric}_{kl}(x_1) x^k x^l + O(|x|^3) = 1 + O(\rho^{2} ).
\end{equation*}
We have
\[
J^2(\rho) \rho^{n-1} = J_{n/2-1}(T\rho)^2 (T \rho)^{-n+2} \rho^{n-1} = T^{-n+2} \rho J_{n/2-1}(T\rho)^2.
\]
Again, we achieve a lower bound by integrating only over a subset $B_r(z)$, namely $B_{r/2}(z)$. For $x$ in this smaller ball, we have $r - \xi \geq r/2$. Thus
\[
\int_B \int_B J^2 \gtrsim \int_{B_{r/2}(z)} \int_0^{r/2} T^{-n+2} \rho J_{n/2-1}(T\rho)^2 d\rho
\]
Change variables to $u = T\rho$ in the inner integral, so that $\rho d\rho =T^{-2} u du$. Then
\[
\int_B \int_B J^2 \gtrsim \vol(B_{r/2}(z)) T^{-n} \int_0^{rT/2} uJ_{n/2-1}(u)^2 du
\]
We have $\vol(B_{r/2}) \asymp \vol(B_r(z)) \asymp r^n$. Also, the function $uJ_{n/2-1}(u)^2$ is bounded because of the $1/\sqrt{u}$ decay of $J$-Bessel functions. It has a non-zero average value
\[
\langle uJ_{n/2-1}(u)^2 \rangle = \lim_{S \rightarrow \infty} \frac{1}{S} \int_0^S uJ_{n/2-1}(u)^2 du > 0.
\]
Hence
\begin{align*}
\int_B \int_B J^2 &\gtrsim \vol(B_r(z))^2 r^{-n} T^{-n} rT \frac{2}{rT} \int_0^{rT/2} uJ_{n/2-1}(u)^2 du \\
&\gtrsim \vol(B)^2 (rT)^{-n+1}
\end{align*}
In terms of $K$, this means
\[
\int_B \int_B K(x,x')^2 dx' dx \gtrsim (\vol(B) (T^n - (T-\eta)^n ) )^2 \left( (rT)^{-n+1} - O\left( \frac{T^{n-1}}{T^n - (T-\eta)^n} \right) \right)
\]
To have a main term, we must assume that
\[
(rT)^{-n+1} \frac{T^n - (T-\eta)^n}{T^{n-1}} \rightarrow \infty.
\]
We have
\[
T^n - (T-\eta)^n = n \eta T^{n-1} - \binom{n}{2} \eta^2 T^{n-2} + \ldots \gtrsim \eta T^{n-1}
\]
so the condition is simply
\begin{equation}
\frac{(rT)^{n-1}}{\eta} \rightarrow 0,
\end{equation}
which is one of the assumptions in Theorem~\ref{thm:main}.

One can obtain a comparable upper bound by a similar argument. Although we only need a one-sided bound to compare $\tr(A^3)$ and $\tr(A^2)$, it is worth knowing that the lower bound above is essentially of the correct order of magnitude. The ball $B_r(z)$ is contained in the re-centered ball $B_{2r}(x)$, so that
\[
\int_{B_r(z)} K(x,x')^2 dx' \leq \int_{B_{2r}(x)} K(x,x')^2 dx'.
\]
As before, we shift to polar coordinates centered at $x$. Weyl's law gives a pointwise upper bound on $K(x,x')$:
\begin{align*}
K(x,x') &\lesssim T^{n-1} \eta \left( (T\rho)^{-(n-1)/2} + \eta^{-1} \right).
\end{align*}
This diverges as $\rho \rightarrow 0$, since we would be better off using the trivial bound $K \lesssim N$ for $\rho < 1/T$, but the singularity is integrable.
We obtain
\begin{align*}
\int_{B_r(z)} K(x,x')^2 dx' &\lesssim (T^{n-1}\eta)^2 \int_0^{2r} \left( (T\rho)^{-(n-1)/2} + \eta^{-1} \right)^2 \rho^{n-1} d\rho \\
&\lesssim T^{2n-2} \eta^2 r^n \left( (rT)^{-(n-1)} + \eta^{-1} (rT)^{-(n-1)/2} + \eta^{-2} \right)
\end{align*}
Integrating over $x$ brings another factor of $\vol(B_r) \asymp r^n$. 
The resulting upper bound for the integral $\int_B \int_B K(x,x')^2 dx dx'$ is
\[
\left(\vol(B)T^{n-1}\eta\right)^2 \left( (rT)^{-(n-1)} + \eta^{-1} (rT)^{-(n-1)/2} + \eta^{-2} \right).
\]

Under our assumption, $\eta$ is large compared to $(rT)^{n-1}$ so that the terms involving $\eta^{-1}$ or $\eta^{-2}$ are even smaller than $(rT)^{-n+1}$. Therefore the upper and lower bounds match up to a constant multiple:
\begin{equation}
\int_B \int_B K(x,x')^2 dxdx' \asymp \left(\vol(B)T^{n-1}\eta \right)^2 (rT)^{-n+1}.
\end{equation}

\section{Euclidean integral for the third moment} \label{sec:euclid}

We have stated our main theorem for any compact Riemannian manifold. This level of generality is possible because the radius of the ball $B$ is shrinking and the triple integral $\int_B \int_B \int_B K_{12} K_{23} K_{31}$ is therefore close to its Euclidean counterpart.
Recall that we write $K_{12} = K(x_1,x_3), K_{23} = K(x_2,x_3), K_{31}(x_3,x_1)$ and similarly $J_{12}, J_{23} J_{31}$ for their approximations by Bessel functions.
Write Weyl's law in the form
\[
K = c_n \left( T^n - (T-\eta)^n \right)J + O(T^{n-1})
\]
where $J$ is the Bessel function from (\ref{eqn:jabbrev}).
It follows that
\[
\int_B \int_B \int_B K_{12}K_{23}K_{31} = c_n' (\eta T^{n-1})^3 \int_B \int_B \int_B J_{12}J_{23}J_{31} + O\left(  (\eta T^{n-1}\vol(B))^3\eta^{-1} \right).
\]
Write  $|x-y|$ for Euclidean distance and $d(x,y)$ for Riemannian distance between the points represented by $x$ and $y$ in coordinates. Because $J$ has bounded derivative, 
\[
J(Td(x,y)) = J(T|x-y|) + O\left( T\big(d(x,y) - |x-y| \big) \right).
\]
In a ball of radius $r \rightarrow 0$, the relative error in approximating Riemannian distances by Euclidean ones is a factor of $1+O(r)$. Hence
\[
J(Td(x,y)) = J(T|x-y|) + O(Tr|x-y|).
\]
Let $\xi = d(x_1,z)$, $\rho = d(x_1,x_3)$, $s = d(x_1,x_2)$. We compare the integral to a Euclidean version, introducing a small error in order to pass to polar coordinates based at $x_1$.
Take polar coordinates $(\rho, \omega)$ where the radial coordinate $\rho = d(x,x_1)$ ranges from 0 to $2r$. 
As in Section~\ref{sec:variance}, the volume form is given approximately by its Euclidean counterpart, namely
\begin{equation}
d\vol(x) = (1 + O(\rho^2) ) \rho^{n-1} d\rho d\omega
\end{equation}
where the error depends on the curvature of $M$.
We integrate and conclude that
\begin{proposition} \label{prop:eucl}
If $B = B_r(z)$ is a ball of radius $r \rightarrow 0$ in a compact manifold $M$ with a smooth Riemannian metric, then
\begin{align*}
&\int_B \int_B \int_B J(Td(x_1,x_2)) J(Td(x_2,x_3)) J(Td(x_3,x_1)) d\vol(x_1) d\vol(x_2) d\vol(x_3) = \\
&\int_{B_0} \int_{B_0} \int_{B_0} J(T|x_1-x_2|) J(T|x_2-x_3|) J(T|x_3-x_1|) \ dx_1 dx_2 dx_3  + O\left( \vol(B)^3 Tr^2 \right) 
\end{align*}
where $B_0$ is a Euclidean ball of radius $r$ centered at the coordinates for the basepoint $z \in M$ (or equally well, centered anywhere in $\R^n$ since the Euclidean integral is translation-invariant).
\end{proposition}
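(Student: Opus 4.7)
The argument is a direct perturbation in geodesic normal coordinates centered at $z$. Once $r$ is smaller than the injectivity radius of $M$ (automatic since $r\to 0$), the exponential map identifies $B = B_r(z)$ with the Euclidean ball $B_0 = \{|x|<r\}\subset\R^n$. In these coordinates I would compare, one factor at a time, the three Riemannian volume forms with Lebesgue measure and the three Riemannian distances appearing as Bessel arguments with their Euclidean counterparts.

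\textbf{Key estimates.} Three inputs drive the comparison: (i) in normal coordinates $g_{ij}(x) = \delta_{ij} + O(|x|^2)$, so $d\vol(x) = \sqrt{\det g(x)}\,dx = (1 + O(r^2))\,dx$ on $B_0$; (ii) the comparison $|d(x,y) - |x-y|| \lesssim r|x-y|$ for $x,y\in B$, already recorded in the paragraph preceding the proposition; and (iii) the radial profile $f(u) := J_{n/2-1}(u)/u^{n/2-1}$ extends smoothly to $u=0$ (with $f(0) = 1/(2^{n/2-1}\Gamma(n/2))$) and, together with $f'$, is globally bounded on $[0,\infty)$ by virtue of the standard Bessel asymptotics $J_\nu(u), J_\nu'(u) = O(u^{-1/2})$. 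Combining (ii) and (iii) yields the per-factor estimate
\[
\bigl|f(Td(x,y)) - f(T|x-y|)\bigr| \lesssim T\bigl|d(x,y) - |x-y|\bigr| \lesssim T r |x-y| \lesssim T r^2
\]
on $B_0$, where the last inequality uses $|x-y|\leq 2r$.

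\textbf{Assembly and the main difficulty.} Write $J_{ij}^d := f(Td(x_i,x_j))$ and $J_{ij}^e := f(T|x_i-x_j|)$. I would expand the difference of triple products as the telescoping sum
\[
J_{12}^d J_{23}^d J_{31}^d - J_{12}^e J_{23}^e J_{31}^e = (J_{12}^d - J_{12}^e)\,J_{23}^d J_{31}^d + J_{12}^e\,(J_{23}^d - J_{23}^e)\,J_{31}^d + J_{12}^e J_{23}^e\,(J_{31}^d - J_{31}^e),
\]
each summand pointwise $O(Tr^2)$ by the previous display and the boundedness of $f$. Integrating over $B^3$ produces error $O(\vol(B)^3 Tr^2)$, while the three volume-form swaps from (i) contribute only an additional $O(\vol(B)^3 r^2)$, which is absorbed. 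The one technical point meriting explicit care is verifying that $f$ and $f'$ are bounded globally on $[0,\infty)$; beyond that check, the proof is routine bookkeeping of small errors and no genuine obstacle arises.
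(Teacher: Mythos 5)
Your argument is correct and follows essentially the same route as the paper: the paper likewise uses the bounded derivative of the Bessel profile together with $d(x,y)=(1+O(r))|x-y|$ to get the per-factor error $O(Tr|x-y|)=O(Tr^2)$, and compares the volume form to its Euclidean counterpart via $d\vol=(1+O(\rho^2))\rho^{n-1}d\rho\,d\omega$, leaving the telescoping of the triple product implicit. Your explicit telescoping and the check that $f$ and $f'$ are globally bounded simply make precise what the paper states in passing.
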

For the original kernel $K$, larger than $J$ by a factor of $T^{n-1}\eta$, this leads to an error $(\eta T^{n-1} \vol(B))^3 Tr^2$. We assume $\eta \lesssim T/(rT)^2$ so that this can be absorbed into the error term $(\eta T^{n-1} \vol(B))^3 \eta^{-1}$ already with us from Weyl's law. Thus we may proceed with the Euclidean integral.

Let us first see what upper bound follows from crude estimates ignoring cancellation in the Bessel integrand.
In higher dimensions, the angular variables $\alpha$ and $\beta$ range over $S^{n-1}$ instead of $S^1$. With $x_1$ as the origin in Euclidean space, write
\[
z = \xi \zeta, \ x_2 = s \alpha, x_3 = \rho \beta.
\]
Then
\[
|x_2 - x_3| = |s\alpha - \rho \beta|
\]
We may choose the coordinates to make $\zeta$ the north pole $(1,0,\ldots, 0)$ and then account for the integration over $\zeta$ with an overall factor of $\vol(S^{n-1})$.
The distance $\xi = d(z,x_1)$ ranges from 0 to $r$. The distances $\rho = d(x_1,x_3)$ and $s=d(x_1,x_2)$ may be as large as $r + \xi$, with an angular range starting from a full sphere $S^{n-1}$ for distances less than $r - \xi$ and shrinking to nothing as the distance approaches $r + \xi$. 
More specifically, the condition $x_2 \in B_r(z)$ is
\[
r^2 > |x_2 - z|^2 = s^2 + \xi^2 - 2\xi s \ \alpha \cdot \zeta
\]
and similarly for $x_3$ with its angle $\beta$.
Thus the angles $\alpha$ and $\beta$ obey
\begin{align} \label{align:angular-range}
\alpha \cdot \zeta &\geq \frac{s^2 + \xi^2 - r^2}{2\xi s} \\
\beta \cdot \zeta &\geq \frac{\rho^2 + \xi^2 - r^2}{2\xi \rho}
\end{align}
and we may simply take the dot products $\alpha \cdot \zeta$ and $\beta \cdot \zeta$ as the first coordinates $\alpha_1$ and $\beta_1$, if we commit to the north pole.
Let us write $\A$ for the range of $\alpha$ defined by (\ref{align:angular-range}), and likewise $\B$ for the range of $\beta$.

\begin{figure}
\begin{tikzpicture}
\draw (0,0) node{$\bullet$};
\draw (0,0.25) node {$x_1$};
\draw (0,0) circle [radius=1];
\draw (0,-0.75) node {$x_3$};
\draw (0,-1) node {$\bullet$};
\draw[dashed] (2.75,0) arc (0:75:2.75);
\draw[dashed] (2.75,0) arc (0:-75:2.75);
\draw (2.9,0.5) node {$\bullet x_2$};
\draw (2,0) node {$\bullet$};
\draw (1.75,0) node {$z$};
\draw[thick] (2,0) circle [radius=3];
%\draw (2.75,-0.5,) node {$\A$};
%\draw (0.825,0.825) node {$\B$};
\end{tikzpicture}
\caption{
An illustration of our notation for the most easily visualized case $n=2$. The thick circle bounds $B_r(z)$. The original center $z$ lies in the direction $(1,0)$ from $x_1$, which we take as a center for the calculation in polar coordinates. The distance $\rho = |x_1 - x_3|$ is small enough that $\B = S^1$, that is, the angle $\beta$ runs over a full circle. On the other hand, $|x_2 - x_1|$ is large enough that the angle $\alpha$ is restricted to a range $\A$ shown by the dashed arc.
We rescale to $w = \xi/r$, $u = Ts$, $v = T\rho$, so that $w$ measures $|x_1 - z|$ relative to the radius $r$ whereas $u$ and $v$ measure $|x_1-x_2|$ and $|x_1 - x_3|$ relative to the wave scale $1/T$.
}
\end{figure}
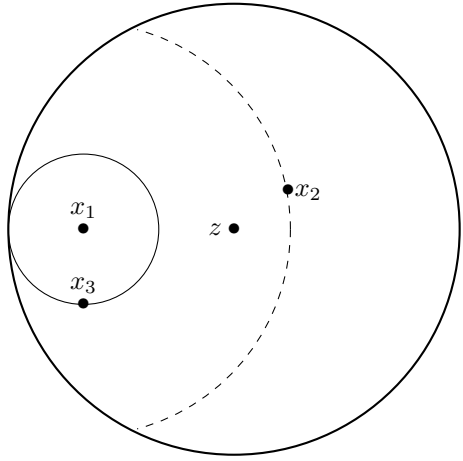

The integral is, up to a factor $\vol(S^{n-1})$,
\[
 \int_{\xi=0}^r \int_{s=0}^{r+\xi} \int_{\rho = 0}^{r+\xi} \int_{\A} \int_{\B} J(Ts)J(T\rho)J(T|s\alpha - \rho \beta |) d\alpha  d\beta \ s^{n-1} ds \ \rho^{n-1} d\rho \ \xi^{n-1} d\xi
\]
where we write $J(y) = c_n J_{n/2-1}(y)/y^{n/2-1}$ for the Bessel function. Change variables to $w = \xi/r$, $u = Ts$, $v = T\rho$, with change of measure
\[
\xi^{n-1} d\xi = r^n w^{n-1} dw, \ s^{n-1} ds = T^{-n} u^{n-1} du, \ \rho^{n-1} d\rho = T^{-n} v^{n-1} dv.
\]
The angles $\alpha$ and $\beta$ are unaffected by this scaling. The integral becomes
\begin{align*}
&\vol(S^{n-1}) r^n T^{-2n} \times \\
&\int_{w=0}^1 \int_{u=0}^{rT(1+w)} \int_{v=0}^{rT(1+w)} J(u)J(v) \int_{\A} \int_{\B} J(|u\alpha - v\beta|) d\alpha d\beta u^{n-1}du \ v^{n-1}dv \ w^{n-1}dw
\end{align*}
Think of the prefactor $r^n T^{-2n}$ as $\vol(B)^3 (rT)^{-2n}$, since $\vol(B)$ is comparable to $r^n$. 
We have
\[
J(u)u^{n-1} = J_{n/2-1}(u) u^{-(n/2-1) + n-1} = J_{n/2-1}(u) u^{n/2}
\]
which oscillates between values of order $u^{(n-1)/2}$. Simply observing that 
\[
J(|u\alpha - v\beta|) \lesssim 1
\]
and ignoring cancellation gives an upper bound
\[
\vol(B)^3 (rT)^{-2n} \left( (rT)^{(n-1)/2+1} \right)^2 = \vol(B)^3 (rT)^{-n+1}.
\]
The exponent $n-1$ must be improved to something larger than $3(n-1)/2$ in order to beat the variance, since the denominator is $\tr(A^2)^{3/2} \asymp (rT)^{-3(n-1)/2}$.

\section{Gegenbauer's addition formula} \label{sec:gegenbauer}

To handle the term $J(|u\alpha - v\beta|)$ in the inner integral, we have recourse to
\begin{fact} (Gegenbauer's addition formula, \cite[formula 10.23.8]{NIST}, \cite[p.362-368]{W})
If $\varpi = \sqrt{u^2+v^2 - 2uv \cos{\theta}}$, then for $\nu \neq 0, -1, -2, \ldots$,
\[
\frac{J_{\nu}(\varpi)}{\varpi^{\nu}} = 2^{\nu} \Gamma(\nu) \sum_{k=0}^{\infty} (\nu+k) \frac{J_{\nu+k}(u)}{u^{\nu}} \frac{J_{\nu+k}(v)}{v^{\nu}} C_k^{\nu}(\cos{\theta})
\]
where $C_k^{\nu}$ are the Gegenbauer polynomials.
\label{fact:gegenbauer}
\end{fact}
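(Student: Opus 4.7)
The Gegenbauer addition formula is a classical identity, so my plan is to sketch one clean derivation using Fourier analysis on the sphere; I find this more illuminating than direct series manipulation. The starting point is the identification, already implicit in Section~\ref{sec:weyl}, of $J_\nu(\varpi)/\varpi^\nu$ as (a constant multiple of) the Fourier transform of surface measure on $S^{n-1} \subset \R^n$, with $\nu = (n-2)/2$:
\[
\int_{S^{n-1}} e^{i\xi\cdot\omega}\,d\sigma(\omega) = (2\pi)^{n/2}\,\frac{J_{n/2-1}(|\xi|)}{|\xi|^{n/2-1}}.
\]
I would first prove the formula for this discrete family of $\nu$ and then extend to all admissible complex $\nu$ by analytic continuation.

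Setting $\xi = u\alpha - v\beta$ with $\alpha,\beta \in S^{n-1}$ making angle $\theta$ gives $|\xi| = \varpi$, and the integrand factors as $e^{iu\alpha\cdot\omega}e^{-iv\beta\cdot\omega}$. I would expand each plane wave in the spherical Jacobi--Anger series
\[
e^{ir\alpha\cdot\omega} = 2^\nu \Gamma(\nu)\sum_{k=0}^{\infty} i^k(\nu+k)\,\frac{J_{\nu+k}(r)}{r^\nu}\,C_k^\nu(\alpha\cdot\omega),
\]
which follows from the Gegenbauer generating function $(1-2xz+z^2)^{-\nu} = \sum_k C_k^\nu(x)z^k$ together with Poisson's integral representation of $J_{\nu+k}$. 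Multiplying the two expansions, interchanging sum and integral (justified by the decay $J_{\nu+k}(r) \sim (r/2)^{\nu+k}/\Gamma(\nu+k+1)$, which makes the tails subgeometric uniformly on compact $(u,v)$), and applying the Funk--Hecke identity
\[
\int_{S^{n-1}} C_j^\nu(\alpha\cdot\omega)\,C_k^\nu(\beta\cdot\omega)\,d\sigma(\omega) = \delta_{jk}\,c_{k,\nu}\,C_k^\nu(\cos\theta)
\]
(equivalently, orthogonality of spherical harmonics repackaged via the addition theorem) collapses the double sum onto the diagonal $j = k$. Bookkeeping of the powers of $i$ and the constants $c_{k,\nu}$ then produces exactly the series on the right-hand side of the claim.

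The main obstacle is extending the identity from the discrete family $\nu = (n-2)/2$ to all $\nu \notin \{0,-1,-2,\ldots\}$. The cleanest route is to observe that both sides are holomorphic in $\nu$ away from the excluded poles (the factor $\Gamma(\nu)$ accounting for the pole structure on the right) and so the identity propagates by analytic continuation. Alternatively, one can give a $\nu$-symbolic proof that avoids the sphere entirely, starting from Poisson's representation
\[
\frac{J_\nu(\varpi)}{\varpi^\nu} = \frac{1}{2^\nu\sqrt{\pi}\,\Gamma(\nu + 1/2)}\int_{-1}^{1} \cos(\varpi t)(1-t^2)^{\nu-1/2}\,dt,
\]
valid for $\Re\nu > -1/2$, inserting $\varpi^2 = u^2+v^2-2uv\cos\theta$, expanding the cosine in power series, and recognizing after rearrangement the Gegenbauer generating function inside the integral; this delivers the identity for $\Re\nu > -1/2$, and analytic continuation then supplies the remaining cases.
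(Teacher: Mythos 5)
The paper does not prove this Fact at all: it is quoted as a classical identity with references to NIST (10.23.8) and Watson (pp.\ 362--368), so there is no internal proof to compare against. Your spherical-harmonics derivation is a legitimate and standard route for the values $\nu=(n-2)/2$ with $n\geq 3$: the plane-wave (Jacobi--Anger/Gegenbauer) expansion you quote is correct, the interchange of sum and integral is justified by the superexponential decay of $J_{\nu+k}$ in $k$, and the product formula $\int_{S^{n-1}}C_j^{\nu}(\alpha\cdot\omega)C_k^{\nu}(\beta\cdot\omega)\,d\sigma(\omega)=\delta_{jk}\frac{\nu|S^{n-1}|}{\nu+k}C_k^{\nu}(\alpha\cdot\beta)$ makes the constants close up exactly (using $|S^{n-1}|=2\pi^{n/2}/\Gamma(n/2)$ one checks $[2^{\nu}\Gamma(\nu)]^2\nu|S^{n-1}|=(2\pi)^{n/2}\,2^{\nu}\Gamma(\nu)$). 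Since the paper only ever invokes the formula with $\nu=n/2-1$, $n\geq 3$, this part of your argument already covers everything the paper needs.

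The genuine gap is the first continuation step. Your spherical argument establishes the identity only for the discrete set $\nu\in\{1/2,1,3/2,\dots\}$, and ``both sides are holomorphic in $\nu$, so the identity propagates by analytic continuation'' is not valid from such a set: the identity theorem requires an accumulation point inside the domain, and a discrete unbounded set has none. To continue from integer or half-integer orders one must invoke something like Carlson's theorem, which in turn requires growth estimates in $\nu$ for the difference of the two sides in a half-plane; you do not address this, and it is not trivial for a series of Bessel functions in the order. Your second route is the correct repair and is essentially Watson's own proof: an argument valid on the open half-plane $\Re\nu>-1/2$ (via Poisson's representation, or more directly by expanding $J_{\nu}(\varpi)/\varpi^{\nu}$ as a power series in $\varpi^{2}=u^{2}+v^{2}-2uv\cos\theta$ and re-expanding each power of $\cos\theta$ in the Gegenbauer basis) does legitimately continue to all $\nu\notin\{0,-1,-2,\dots\}$ once one checks locally uniform convergence of the right-hand side in $\nu$. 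But as written the key step there (``recognizing after rearrangement the Gegenbauer generating function inside the integral'') is only gestured at, so the proposal as it stands proves the Fact only for the half-integer orders, which happens to suffice for the paper but not for the statement as claimed.
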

Recall that the polynomials $C_k^{\nu}(t)$ are orthogonal over the interval $[-1,1]$ with weight $(1-t^2)^{\nu - 1/2}$. Their maximum value is attained at
\begin{equation} \label{eqn:max-gegenbauer}
C_k^{\nu}(1) = \frac{(2\nu)_k}{k!} = \frac{(n-3+k)!}{(n-3)!k!} \lesssim k^{n-3}.
\end{equation}
The notation $(2\nu)_k$ is shorthand for $2\nu(2\nu+1)\cdots (2\nu+k-1)$.
Note that when $\nu =1/2$ the Gegenbauer polynomials are bounded by 1, no matter how high the degree.
For our application, $\nu = n/2-1$, so this bounded case arises when $n=3$. In higher dimensions $n > 3$, $C_k^{\nu}$ at its largest grows as a power of $k$. 
Nevertheless, the series in Fact~\ref{fact:gegenbauer} converges because of the rapid decay in $J_{\nu+k}(u)J_{\nu+k}(v)$ as a function of $k$.

To bring Gegenbauer's addition formula to bear on our inner integral, note that
\[
|u\alpha - v\beta|^2 = u^2 + v^2 - 2\alpha \cdot \beta
\]
so we may take $\theta = \arccos(\alpha \cdot \beta)$. Write $\A = \A(u,v)$ and $\B = \B(u,v)$ for the region of integration over the angles $\alpha$ and $\beta$ from (\ref{align:angular-range}), suppressing the dependence on the outermost variables $w$ and $\zeta$. The integral is
\begin{align*}
&\int_{\A(u,v)} \int_{\B(u,v)} \frac{J_{\nu}(|u\alpha - v\beta|)}{|u\alpha - v\beta|^{\nu}} \ d\alpha d\beta = \\ &\int_{\A(u,v)} \int_{\B(u,v)} 2^{\nu} \Gamma(\nu) \sum_{k=0}^{\infty} (\nu+k) \frac{J_{\nu+k}(u)}{u^{\nu}} \frac{J_{\nu+k}(v)}{v^{\nu}} C_k^{\nu}(\alpha \cdot \beta) \ d\alpha d\beta
\end{align*}
Next we integrate over $u$ and $v$. This can be interchanged with the sum over $k$, again thanks to the rapid decay of $J_{\nu+k}$ as $k$ grows. This leaves us with
\begin{align*}
 \sum_{k=0}^{\infty} (\nu+k) \int_{u=0}^{(1+w)rT} \int_{v=0}^{(1+w)rT} &du dv \ u^{n-1} v^{n-1} \frac{J_{\nu}(u)J_{\nu+k}(u)}{u^{2\nu}} \frac{J_{\nu}(v)J_{\nu+k}(v)}{v^{2\nu}} \\
&\int_{\A(u,v)} \int_{\B(u,v)} d\alpha d\beta \  C_k^{\nu}(\alpha \cdot \beta)
\end{align*}

For many ranges of the parameters, $\A(u,v)$ or $\B(u,v)$ is a full sphere $S^{n-1}$, in which case $\int_{\A} \int_{\B} C_k^{\nu}(\alpha \cdot \beta)$ is exactly 0 by orthogonality of the Gegenbauer polynomials. In general, we make the following claim.
\begin{proposition} \label{prop:ultraspherical-mono}
For $n \geq 3$, $\nu = n/2-1$, concentric arcs $\A$ and $\B$ as above, and degree $k \geq 1$
\begin{equation}
\left| \int_{\A} \int_{\B} C_k^{\nu}(\alpha \cdot \beta) \ d\alpha d\beta \right| \lesssim k^{n/2-3}.
\end{equation}
\end{proposition}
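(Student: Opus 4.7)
The plan is to exploit the fact that $\A$ and $\B$ are concentric spherical caps around the common axis $\zeta$, so that $\mathbf{1}_{\A}$ and $\mathbf{1}_{\B}$ are zonal with respect to $\zeta$ and their degree-$k$ harmonic projections land in the one-dimensional subspace spanned by $C_k^\nu(\,\cdot\,\zeta)$. Reading Gegenbauer's addition formula of Fact~\ref{fact:gegenbauer} as a spherical reproducing kernel (Funk--Hecke), the function $\alpha \mapsto \int_{\B} C_k^\nu(\alpha \cdot \beta)\,d\beta$ is a degree-$k$ spherical harmonic that is zonal about $\zeta$, hence a scalar multiple of $C_k^\nu(\alpha \cdot \zeta)$. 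Pinning the constant down by setting $\alpha = \zeta$ reduces the double integral to a product of one-dimensional ones,
\[
\int_{\A}\int_{\B} C_k^\nu(\alpha \cdot \beta)\,d\alpha\,d\beta \;=\; \frac{I_{\A}\, I_{\B}}{C_k^\nu(1)}, \qquad I_{\A} = \int_{\A} C_k^\nu(\alpha \cdot \zeta)\,d\alpha,
\]
with $I_{\B}$ defined analogously.

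Next I would evaluate $I_{\A}$ in closed form. Passing to polar coordinates about $\zeta$ gives $I_{\A} = \omega_{n-2} \int_a^1 C_k^\nu(t)\,(1-t^2)^{\nu-1/2}\,dt$, where $a$ is the cosine of the angular radius of $\A$. Rewriting the Gegenbauer ODE in Sturm--Liouville form
\[
\frac{d}{dt}\bigl[(1-t^2)^{\nu+1/2}\,(C_k^\nu)'(t)\bigr] \;=\; -k(k+2\nu)(1-t^2)^{\nu-1/2}\,C_k^\nu(t),
\]
using $(C_k^\nu)'(t) = 2\nu\,C_{k-1}^{\nu+1}(t)$, and integrating from $a$ to $1$ (the boundary term at $t=1$ vanishes thanks to $(1-t^2)^{\nu+1/2}$) yields the clean identity
\[
I_{\A} \;=\; \frac{2\nu\,\omega_{n-2}}{k(k+2\nu)}\,(1-a^2)^{\nu+1/2}\,C_{k-1}^{\nu+1}(a),
\]
and the analogous expression for $I_\B$.

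To finish, I would apply two complementary pointwise estimates for the Gegenbauer factors. On one, the trivial maximum $|C_{k-1}^{\nu+1}(x)| \leq C_{k-1}^{\nu+1}(1) \lesssim k^{2\nu+1}$ combined with $(1-x^2)^{\nu+1/2}\leq 1$; on the other, the Bernstein--Szeg\H{o} bound $(1-x^2)^{(\nu+1)/2}|C_{k-1}^{\nu+1}(x)| \lesssim k^\nu$ (valid because $\nu+1 > 0$ for $n \ge 2$) combined with $(1-x^2)^{\nu/2}\leq 1$. Together with the asymptotic $C_k^\nu(1) \asymp k^{2\nu-1}$ from (\ref{eqn:max-gegenbauer}), these give
\[
\left|\int_{\A}\int_{\B} C_k^\nu(\alpha \cdot \beta)\,d\alpha\,d\beta\right| \;\lesssim\; \frac{k^{2\nu+1}\cdot k^\nu}{k^2\cdot k^2\cdot k^{2\nu-1}} \;=\; k^{\nu-2} \;=\; k^{n/2-3},
\]
as claimed. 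The main subtlety is uniformity in the cap radii: the Bernstein bound diverges like $(1-x^2)^{-(\nu+1)/2}$ as the caps degenerate, but the Sturm--Liouville integration supplies exactly the matching factor $(1-x^2)^{\nu+1/2}$, so the estimate holds uniformly in $a$ and $b$. Every ingredient is classical; the real work is tracking the exponents carefully enough that the Gegenbauer series of Section~\ref{sec:gegenbauer} converges at a rate strong enough to beat the variance lower bound of Theorem~\ref{thm:2nd-mom}.
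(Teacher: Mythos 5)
Your proposal is correct, and it reaches the bound by a genuinely different organization of the argument than the paper's. The paper fixes $\alpha$, integrates over $\beta$ in polar coordinates centered at $\alpha$ (after arranging $\A \subseteq \B$ so that the endpoint $\theta=0$ contributes no boundary term), evaluates the resulting $\theta$-integral exactly via the antiderivative (\ref{eqn:antiderivative}), bounds it by $k^{n/2-3}$ uniformly in $\alpha$ and the azimuthal angle via Szeg\H{o}'s asymptotic, and then integrates over $\alpha$ using only that $\A$ has bounded measure. You instead exploit the zonal structure at the outset: by the addition theorem for spherical harmonics together with the Funk--Hecke theorem (note this is not the Bessel identity of Fact~\ref{fact:gegenbauer}, despite your phrasing), the inner integral $\int_{\B} C_k^{\nu}(\alpha\cdot\beta)\,d\beta$ equals $\frac{I_{\B}}{C_k^{\nu}(1)}\,C_k^{\nu}(\alpha\cdot\zeta)$ exactly, so the double integral factors as $I_{\A}I_{\B}/C_k^{\nu}(1)$; your Sturm--Liouville evaluation of each cap integral is equivalent to (\ref{eqn:antiderivative}), and your closing pointwise bounds are the same in spirit as the paper's. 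Two remarks: first, the sharp-constant Bernstein--Szeg\H{o} inequality is classically stated for Gegenbauer parameter in $(0,1)$, so for $\nu+1 \geq 3/2$ you should justify the order-of-magnitude bound $(\sin\theta)^{\nu+1}|C_{k-1}^{\nu+1}(\cos\theta)| \lesssim_{\nu} k^{\nu}$, which does follow uniformly from the same asymptotic (8.21.17) of \cite{S} used in the paper together with $|J_a(y)| \lesssim \min(1, y^{-1/2})$; second, you also implicitly use the two-sided estimate $C_k^{\nu}(1) \asymp k^{2\nu-1}$ (the paper's (\ref{eqn:max-gegenbauer}) records only the upper bound, but the lower bound is immediate for fixed $\nu>0$). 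What your factorization buys is that it avoids the geometric bookkeeping with $\A\subseteq\B$ and $\theta_+(\alpha,\omega)$, is manifestly uniform in the cap radii, and is in fact stronger than needed: applying the Bernstein-type bound to both $I_{\A}$ and $I_{\B}$ gives $|\int_{\A}\int_{\B} C_k^{\nu}(\alpha\cdot\beta)\,d\alpha\,d\beta| \lesssim k^{-3}$, improving on $k^{n/2-3}$ for every $n\geq3$; either bound is ample for the convergence of the series in Section~\ref{sec:gegenbauer}.
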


\begin{proof}
The angular ranges $\A$ and $\B$ are concentric spherical caps (centered at $\zeta$, the direction from $x_1$ to $z$). We may assume that $\A \subseteq \B$, reversing the order of $\alpha, \beta$ if need be. We integrate with respect to $\beta$ using polar coordinates $(\theta, \omega)$ based at $\alpha$. Thus $\cos{\theta} = \alpha \cdot \beta$ assumes some interval of values depending on the distance between $\alpha$ and $\zeta$, and also on the angle $\omega$. Because $\A \subseteq \B$, this interval contains $\theta = 0$ since $\beta = \alpha$ automatically lies in the range $\B$. The variable $\theta$ ranges over an interval $0 \leq \theta \leq \theta_{+}$ where $\theta_+ = \theta_+(\alpha,\omega)$, or this interval together with $\pi - \theta_+ \leq \theta \leq \pi$ if $\A$ and $\B$ exceed a hemisphere. This only changes $\cos{\theta}$ by a sign and the argument below will apply equally well. Depending on the angle $\omega$, $\theta_+$ lies somewhere between the extreme cases ${\rm diam}(\B)/2 - d(\alpha,\zeta)$ and ${\rm diam}(\B)/2 + d(\alpha,\zeta)$. 
Note also that $\theta_+ \leq \pi/2$ or else the hemispheres would merge. 
We write
\[
\int_{\B} C_k^{\nu}(\alpha \cdot \beta) d\beta = \int_{S^{n-2}} \int_0^{\theta_+} C_k^{\nu}(\cos{\theta}) \sin(\theta)^{n-2} d\theta d\omega
\]
where $(\sin\theta)^{n-2} d\theta d\omega$ is the volume element on $S^{n-1}$ and the polynomials $C_k^{\nu}(\cos\theta)$ are orthogonal on the interval $0 \leq \theta \leq \pi$ with respect to the weight $\sin(\theta)^{n-2}$. 
For the integral over $\theta$, note that with $x = \cos{\theta}$ and $\nu = n/2-1$, 
\[
C_k^{\nu}(x) (1-x^2)^{\nu-1/2} dx = C_k^{\nu}(\cos{\theta}) \sin(\theta)^{2\nu-1} \sin(\theta) d\theta = C_k^{\nu}(\cos{\theta}) \sin(\theta)^{n-2}d\theta.
\] 

We have the following exact antiderivative
\begin{equation} \label{eqn:antiderivative}
\int C_k^{\nu}(x) (1-x^2)^{\nu-1/2} dx = -\frac{1}{2k} \frac{4\nu}{2\nu+k} (1-x^2)^{\nu+1/2}C_{k-1}^{\nu+1}(x).
\end{equation}
This is a special case of formula (18.17.1) in \cite{NIST}, which gives the integral of Jacobi polynomials:
\[
\int (1-x)^a (1+x)^b P_k^{(a,b)}(x) dx = \frac{-1}{2n} (1-x)^{a+1}(1+x)^{b+1} P_{k-1}^{(a+1,b+1)}(x).
\]
The conversion between the ultraspherical notation $C_k^{\nu}$ and the Jacobi polynomial is
\[
C_k^{\nu}(x) = \frac{(2\nu)_k}{(\nu+1/2)_k} P_k^{(\nu-1/2,\nu-1/2)}(x)
\]
where $(t)_k = t(t+1)\cdots (t+k-1)$ denotes a rising factorial.
We have
\begin{align*}
\frac{(2\nu)_k}{(2\nu+2)_{k-1}} &= \frac{2\nu (2\nu+1) \ldots (2\nu+k-1) }{(2\nu+2)(2\nu+3) \ldots (2\nu+2+k-2) } = \frac{2\nu}{2\nu +k} \\
\frac{(\nu+3/2)_{k-1} }{(\nu+1/2)_k } &= \frac{(\nu+3/2)\ldots(\nu+3/2+k-2)}{(\nu+1/2)\ldots (\nu+1/2+k-1)} = \frac{1}{\nu+1/2}
\end{align*}
Converting between $C_k^{\nu}$ and $P_k^{(\nu-1/2,\nu-1/2)}$ under the integral, as well as $P_{k-1}^{(\nu+1/2,\nu+1/2)}$ and $C_{k-1}^{\nu+1}$ outside it, therefore brings a factor
\begin{equation*}
\frac{(2\nu)_k}{(\nu+1/2)_k} \frac{(\nu+3/2)_{k-1}}{(2\nu+2)_{k-1} } = \frac{2\nu}{2\nu+k} \frac{1}{\nu+1/2} = \frac{4\nu}{2\nu+k}
\end{equation*}
as stated in (\ref{eqn:antiderivative}).

For the definite integral, note that when $\theta = 0$, we have $1-x^2 = 0$, so that only the upper endpoint $\theta_+$ contributes.
It follows that
\[
\int_0^{\theta_+} C_k^{\nu}(\cos{\theta}) (\sin\theta)^{n-2} d\theta = \frac{2\nu}{k(2\nu+k)} (\sin\theta_+)^{2\nu+1}C_{k-1}^{\nu+1}(\cos{\theta_+}).
\]

We recall Szeg\H{o}'s asymptotic formula (8.21.17) in \cite{S}: For large degree $k$, the Gegenbauer polynomial is given approximately by
\[
\frac{C_k^{\nu}(\cos{\theta}) }{C_k^{\nu}(1) } (\sin\theta)^{n-2} \approx \frac{J_{\nu-1/2}((k+\nu)\theta)}{((k+\nu)\theta)^{\nu-1/2}  }\left(\lim_{t \rightarrow 0} J_{\nu-1/2}(t)/t^{\nu-1/2} \right)^{-1} (\sin\theta)^{n-2}
\]
up to an additive error of $O(\theta^{1/2}k^{-\nu-1})$, and even smaller if $\theta \lesssim 1/k$.
In particular, applying this with $\nu+1$ in place of $\nu$ gives
\[
\left| \sin(\theta_+)^{2\nu+1}C_{k-1}^{\nu+1} (\cos{\theta_+}) \right| \lesssim C_{k-1}^{\nu+1}(1) \left| \frac{J_{\nu+1/2}((k+\nu)\theta_+)}{((k+\nu)\theta_+)^{\nu+1/2}} \right| \theta_+^{2\nu+1}.
\]
We have already noted that $C_k^{\nu}(1) \lesssim k^{n-3}$, and changing $\nu = n/2-1$ to $\nu+1$ corresponds to increasing $n$ by 2. Thus 
\[
C_{k-1}^{\nu+1}(1) \lesssim (k-1)^{(n+2)-3} \leq k^{n-1}.
\] 
The large-argument bound $J_{\nu+1/2}(y) \lesssim y^{-1/2}$ then gives
\[
C_{k-1}^{\nu+1}(1) \left| \frac{J_{\nu+1/2}((k+\nu)\theta_+)}{((k+\nu)\theta_+)^{\nu+1/2}} \right| \theta_+^{2\nu+1} \lesssim k^{n-1} (k\theta_+)^{-\nu-1} \theta_+^{2\nu+1} \lesssim k^{n/2-1}.
\]
We have used the fact that $n \geq 3$ to bound $\theta_+^{\nu} \lesssim 1$, the exponent being nonnegative.
Taking account of the prefactor 
\[
\frac{2\nu}{k(k+2\nu)} \lesssim k^{-2},
\]
it follows that
\[
\int_0^{\theta_+} C_k^{\nu}(\cos{\theta}) \sin(\theta)^{n-2} d\theta \lesssim k^{-2}k^{n/2-1} = k^{n/2-3}.
\]
Integrating over $\alpha$ and $\omega$ brings only a bounded factor (volumes of spheres, depending only on $n$) and the claim follows. 
\end{proof}

Using Proposition~\ref{prop:ultraspherical-mono}, the upper bound for the third moment becomes
\begin{equation} \label{eqn:gegenbauer-bound}
\vol(B)^3 (rT)^{-2n} \sum_k (\nu+k) k^{n/2-3} \left( \int_0^{rT} uJ_{\nu}(u)J_{\nu+k}(u) \ du \right)^2.
\end{equation}

\section{Beginning, middle, and end} \label{sec:bessel}

In this section, we estimate the following one-variable integral arising from Section~\ref{sec:gegenbauer}.
\begin{proposition} \label{prop:bessel-int-1var}
As $rT \rightarrow \infty$ with $m \leq 2rT$,
\begin{equation} \label{eqn:small-m}
\int_0^{(1+w)rT} u|J_{\nu}(u) J_m(u)| du \lesssim rT
\end{equation}
whereas if $m > 2rT$, then
\begin{equation} \label{eqn:large-m}
\int_0^{(1+w)rT} u|J_{\nu}(u)J_m(u)| du \leq \exp(-c(b) m^{b})
\end{equation}
for any $b<1$, with $c(b) > 0$ a correspondingly small constant. 
Both cases (\ref{eqn:small-m}) and (\ref{eqn:large-m}) are uniform with respect to $0 \leq w \leq 1$ (or any bounded interval of $w$).
\end{proposition}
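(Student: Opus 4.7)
The structure of the proof is suggested by the section title: decompose the integration range into three pieces corresponding to the three qualitative regimes of the Bessel function $J_m$. These are the \emph{beginning} $u \in [0, m - Cm^{1/3}]$, on which Debye's asymptotic expansion shows $J_m(u)$ is exponentially small in $m$; the \emph{middle} transition region $|u - m| \leq Cm^{1/3}$ of width $O(m^{1/3})$, on which the Airy approximation yields $|J_m(u)| \lesssim m^{-1/3}$; and the \emph{end} $u \geq m + Cm^{1/3}$, on which $J_m$ is oscillatory with $|J_m(u)| \lesssim u^{-1/2}$. The order $\nu = n/2 - 1$ is fixed, so throughout $u \geq 1$ one also has $|J_\nu(u)| \lesssim u^{-1/2}$.

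To prove the small-$m$ bound~(\ref{eqn:small-m}), where $R = (1+w)rT$ and $m \leq 2rT$, I would treat the three pieces of $[0, R]$ separately. On the end piece the pointwise estimates combine to $u|J_\nu(u) J_m(u)| \lesssim 1$, contributing $O(R) = O(rT)$. On the middle piece the product of pointwise bounds is of order $m \cdot m^{-1/2} \cdot m^{-1/3} = m^{1/6}$, and the interval has length $O(m^{1/3})$, so this piece contributes $O(m^{1/2}) \lesssim (rT)^{1/2}$. On the beginning piece, Debye's bound gives a contribution exponentially small in $m$. Summing and using $m \leq 2rT$ yields the desired $O(rT)$, uniformly in $w \in [0,1]$ since the bounds depend only on $m$ and $rT$.

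For the large-$m$ bound~(\ref{eqn:large-m}), the hypothesis places the entire range of integration within the beginning regime of $J_m$, with $R/m$ strictly less than $1$. Debye's formula,
\[
|J_m(u)| \lesssim \frac{\exp\bigl(-m(\alpha - \tanh\alpha)\bigr)}{\sqrt{m \tanh\alpha}}, \qquad u = m\sech\alpha,\ \alpha > 0,
\]
provides pointwise exponential decay. Writing $u = m\sech\alpha$, a short calculation shows that $m(\alpha - \tanh\alpha) \gtrsim m^b$ for any prescribed $b < 1$ as long as $1 - u/m$ does not shrink too quickly with $m$; the hypothesis on the growth of $m$ relative to $rT$ secures this for all $u \in [0, R]$. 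Multiplying the Debye bound by $u|J_\nu(u)| \lesssim (rT)^{1/2}$ and integrating up to $R$ then yields the claimed $\exp(-c(b) m^b)$, after absorbing the polynomial prefactor into the exponential with a slightly smaller constant.

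The most delicate step is the pointwise control of $J_m$ near the turning point $u=m$, where the oscillatory estimate $u^{-1/2}$ is invalid and one must invoke the Airy approximation. In this narrow window the crude estimate $|J_m(u)| \lesssim 1$ would already produce $O(m^{5/6}) \lesssim rT$, but the Airy improvement is cleaner and emphasises that the three regimes together cover the interval. I expect no serious obstacles beyond careful bookkeeping of the separating constant $C$ and the constant $c(b)$ governing the exponential rate in the large-$m$ case.
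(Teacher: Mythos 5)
Your outline follows the paper's proof in all essentials: split at $m\pm O(m^{1/3})$ (the paper uses $m^{1/3+\delta}$), make the piece before the turning point exponentially small via a Debye/Kapteyn-type bound (the paper's inequality (\ref{eqn:kap}) has exactly your exponent $-m(\alpha-\tanh\alpha)$ after the substitution $u=m\sech\alpha$), use $|J_m|\lesssim m^{-1/3}$ in the transition zone, and use the oscillatory envelope beyond it. Two points need repair. First, your pointwise claim on the end piece, $|J_m(u)|\lesssim u^{-1/2}$ for all $u\ge m+Cm^{1/3}$, is false near the turning point: the correct envelope is $(u^2-m^2)^{-1/4}$ as in (\ref{eqn:large-argument}), which at $u=m+Cm^{1/3}$ is of size about $m^{-1/3}$, much larger than $u^{-1/2}\approx m^{-1/2}$. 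The conclusion survives, since $\int_{m+Cm^{1/3}}^{(1+w)rT} u^{1/2}(u^2-m^2)^{-1/4}\,du\lesssim m(1+rT/m)\lesssim rT$ for $m\le 2rT$ (this is the computation the paper does after substituting $x=u/m$), but as written your end-piece estimate rests on an incorrect inequality and should be replaced by this one.

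Second, in the large-$m$ case your assertion that $m>2rT$ places the \emph{entire} range $[0,(1+w)rT]$ in the exponentially small regime, and that $m(\alpha-\tanh\alpha)\gtrsim m^b$ for all $u\le (1+w)rT$, requires $m-(1+w)rT\gtrsim m^{1/3}$, not merely $(1+w)rT<m$. For $m$ within $O(m^{1/3+\delta})$ of $2rT$ the transition zone intrudes, the exponent $m\bigl(1-(u/m)^2\bigr)^{3/2}$ is only $O(1)$ at the top of the range, and the integral is then of size up to about $m^{1/2+\delta}$ rather than $\exp(-c(b)m^{b})$. The paper's own proof elides this same borderline (its claim that ``only the initial segment is present'' when $m>2rT$ also needs $m_-\ge (1+w)rT$), and in the application of Section~\ref{sec:complete} these boundary values of $m$ can simply be absorbed into the bulk bound $\lesssim rT$; still, your statement that the hypothesis ``secures this for all $u\in[0,R]$'' is not literally correct and should either be restricted to $m\ge 2rT+Cm^{1/3+\delta}$ or accompanied by the remark that the remaining $m$ are covered by the bulk estimate (\ref{eqn:small-m}).
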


The proposition is easy to see for fixed $m$, because the integrand $uJ_{\nu}(u)J_m(u)$ is a bounded function of $u$ and the interval of integration has length at most $2rT$. The difficulty lies in allowing $m$ to grow with $rT$, but we will see that the same reasoning applies after subdividing the integral depending on the size of $m$.
For our application, $m = k+\nu$ as before.

Fix a $\delta > 0$ and let $m_{\pm} = m \pm m^{1/3+\delta}$. Then split the integral as
\[
\int_0^{(1+w)rT} u |J_{\nu}(u)J_m(u)| = \int_0^{m_-} + \int_{m_-}^{m_+} + \int_{m_+}^{(1+w)rT}
\]
where the latter two ranges are empty if $m$ is large enough that $m_- \geq (1+w)rT$.
These ranges are chosen in light of the fact that $J_m(u)$ is very small when $u$ is much less than $m$, mildly decaying and oscillatory when $u$ is much larger than $m$, and with a transition between these regimes taking place when $|u-m| \lesssim m^{1/3}$. More precisely,
\begin{proposition} \label{prop:bessel-facts}
For $0 < x \leq 1$ and $m \geq 0$ not necessarily an integer,
\begin{equation} \label{eqn:kap}
\log |J_m(mx)| \leq m \left(\log{x} + \sqrt{1-x^2} - \log(1+\sqrt{1-x^2}) \right).
\end{equation}
For $u > m$,
\begin{equation} \label{eqn:large-argument}
|J_m(u)| \lesssim (u^2-m^2)^{-1/4}.
\end{equation}
For any $u$, but of particular importance when $|u-m| < m^{1/3}$,
\begin{equation} \label{eqn:transition}
J_m(u) \lesssim m^{-1/3}.
\end{equation}
\end{proposition}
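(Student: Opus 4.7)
Each of the three bounds is a classical Bessel function estimate; my plan is to derive them from standard integral representations and the Debye/uniform asymptotic theory, deferring the underlying calculations to \cite{NIST} or Watson's treatise where necessary.

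For \eqref{eqn:kap}, which is Kapteyn's inequality, the starting point is Schl\"afli's contour representation
\[
J_m(mx) = \frac{1}{2\pi i}\oint e^{m\phi(t)}\,\frac{dt}{t}, \qquad \phi(t) = \frac{x}{2}\bigl(t - t^{-1}\bigr) - \log t,
\]
valid for integer $m$ (for non-integer $m$ one uses the analogous Hankel--Sommerfeld contour). For $0 < x < 1$ the saddle points $t_\pm = (1 \pm \sqrt{1-x^2})/x$ are both real and positive, and a direct computation at the dominant saddle $t_+$ gives
\[
\phi(t_+) = \log x + \sqrt{1-x^2} - \log\bigl(1 + \sqrt{1-x^2}\bigr).
\]
Deforming the contour to pass through $t_+$ along the steepest-descent direction and taking absolute values bounds $|J_m(mx)|$ by $e^{m\phi(t_+)}$ times an absolute constant; with more care (Kapteyn's original argument) one shows the constant is in fact $1$, which yields \eqref{eqn:kap} exactly.

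For \eqref{eqn:large-argument}, which controls $J_m$ in the oscillatory regime $u > m$, I would invoke the Debye asymptotic expansion \cite[\S 10.19]{NIST}:
\[
J_m(u) = \sqrt{\frac{2}{\pi (u^2 - m^2)^{1/2}}}\,\cos\bigl(\sqrt{u^2 - m^2} - m\arccos(m/u) - \tfrac{\pi}{4}\bigr) + \textnormal{l.o.t.},
\]
uniformly in $m$ provided $u - m$ stays on a scale at least comparable to $m^{1/3}$. Bounding the cosine trivially gives the desired estimate; the error term is of strictly lower order and is absorbed into the implicit constant.

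For \eqref{eqn:transition}, which is Landau's uniform bound $J_m(u) \lesssim m^{-1/3}$, I would split according to the position of $u$. In the transition zone $|u - m| \lesssim m^{1/3}$, the uniform Airy expansion \cite[\S 10.20]{NIST} gives $J_m(u) \approx (2/m)^{1/3}\,\Ai\bigl(-(u-m)(2/m)^{1/3}\bigr)$ plus lower-order terms, and since $\Ai$ is uniformly bounded on $\R$ this is $\lesssim m^{-1/3}$. Outside the transition zone, \eqref{eqn:kap} (for $u < m$) and \eqref{eqn:large-argument} (for $u > m$) already give bounds sharper than $m^{-1/3}$, so no new input is needed there. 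The principal technical subtlety, shared across all three parts, is ensuring that the Debye and Airy asymptotics are genuinely uniform in $m$ and match across the three regimes $u < m$, $|u-m| \lesssim m^{1/3}$, $u > m$; this is precisely the content of the classical uniform asymptotic theory, to which I would defer rather than re-derive.
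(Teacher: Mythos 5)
Your proposal is correct and is essentially the paper's own route: the paper simply cites these three classical facts (Kapteyn's inequality as NIST (10.14.15), the other two as consequences of the uniform Debye/Airy asymptotics, with proofs deferred to Watson ch.\ 8 and Olver), which is exactly the saddle-point/Debye/Airy machinery you sketch and likewise defer to. The one small point to note is that your Debye argument yields (\ref{eqn:large-argument}) only for $u-m\gtrsim m^{1/3}$; in the remaining strip $0<u-m\lesssim m^{1/3}$ it follows from the Airy bound, since there $(u^2-m^2)^{-1/4}\gtrsim m^{-1/3}$, a one-line patch that also suffices for the way the bounds are used in Section~\ref{sec:bessel}.
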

The first of these is inequality (10.14.15) in \cite{NIST}. The latter two bounds are crude consequences of much more detailed asymptotics available for $J_m(u)$. See, for instance, (10.20.4) of \cite{NIST} or Chapter 7.4 of \cite{E} for statements. For proofs, see chapter 8 of Watson's book \cite{W} or Olver's article \cite{O}. Note that the bounds are compatible in the sense that (\ref{eqn:large-argument}) recovers (\ref{eqn:transition}) when $u-m$ is of order $m^{1/3}$. Note also that (\ref{eqn:large-argument}) reduces to $J_m(u) \lesssim u^{-1/2}$ when $u$ is much larger than $m$, for instance if $m$ is fixed.

In the initial range $\int_0^{m_-}$, we apply (\ref{eqn:kap}) with $x \leq 1 - m^{-2/3+\delta}$. First, we use 
\[
\log{x} = \frac{1}{2}\log(1-{\sqrt{1-x^2}}^2)
\]
to expand the upper bound
\[
\log |J_m(mx)| \leq m \left(\log{x} + \sqrt{1-x^2} - \log(1+\sqrt{1-x^2}) \right)
\]
as a power series in $\sqrt{1-x^2}$.
Note the series expansion
\[
\frac{1}{2} \log(1-y^2) + y - \log(1+y) = -\frac{1}{3}y^3 - \frac{1}{5}y^5 - \ldots
\]
with $y = \sqrt{1-x^2}$ for our application.
It follows that
\[
\log{x} + \sqrt{1-x^2} - \log(1+\sqrt{1-x^2}) \leq -\frac{1}{3} (1-x^2)^{3/2} .
\]
In the range $u \leq m_-$, we have $x \leq 1 - m^{-2/3+\delta}$. Thus
\[
\frac{-m}{3}(1-x^2)^{3/2} \leq -\frac{m^{2/3+\delta/2}}{3} (1-x).
\]
Now we can use $J_{\nu}(u) \lesssim u^{-1/2}$ to bound the initial contribution by an elementary integral:
\begin{align*}
\int_1^{m_-} u|J_{\nu}(u)J_m(u)| du &\lesssim m\int_{1/m}^{1-m^{-2/3+\delta}} (mx)^{1/2} e^{m(\log{x} + \sqrt{1-x^2} - \log(1+\sqrt{1-x^2}))} dx \\
&\lesssim m\int_0^{1-m^{-2/3+\delta}} \exp\left( - \frac{m^{2/3+\delta/2}}{3}(1-x) \right) dx \\
&\lesssim \exp\left(-cm^{3\delta/2} \right)
\end{align*}
for a constant $c > 0$.

For the middle contribution, we use the transitional bound $J_m(u) \lesssim m^{-1/3}$ together with $J_{\nu}(u) \lesssim u^{-1/2}$ for fixed $\nu$ to conclude that
\begin{align*}
\int_{m_-}^{m_+} u|J_m(u)J_{\nu}(u)| du &\lesssim m^{-1/3} (m_+ - m_-) m^{1/2} \\
&= 2m^{1/2 + \delta}
\end{align*}

For the final contribution, we see from (\ref{eqn:large-argument}) that
\[
\int_{m_+}^{(1+w)rT} |uJ_{\nu}(u)J_m(u)| du \lesssim \int_{m_+}^{2rT} u^{1/2} (u^2-m^2)^{-1/4}du.
\]
Changing variables to $x = u/m$ gives
\[
\int_0^{(1+w)rT} |uJ_{\nu}(u)J_m(u)| du \lesssim m \int_{1 + m^{-2/3+\delta}}^{2rT/m} x^{1/2} (x^2-1)^{-1/4} dx
\]
Because $x^{1/2}(x^2-1)^{-1/4}$ is integrable near $x=1$ and bounded for $x$ away from 1, we have
\begin{align*}
m \int_{1 + m^{-2/3+\delta}}^{2rT/m} x^{1/2} (x^2-1)^{-1/4} dx &\lesssim m\int_1^{2rT/m} (1-x^{-2})^{-1/4} \\
&\lesssim m(1 + rT/m)
\end{align*}
If $m > 2rT$, then for any value of $w$, only the initial segment $\int_0^{m_-}$ is present and so the integral is exponentially small, as claimed in (\ref{eqn:large-m}). If $m \leq 2rT$, then in principle there may also be contributions as large as $m(1+rT/m)$ and $2m^{1/2+\delta}$, both of which are dominated by $rT$ for this range of $m$. Thus the integral is of the order $rT$ as claimed in (\ref{eqn:small-m}). This proves Proposition~\ref{prop:bessel-int-1var}.

\section{Completing the proof} \label{sec:complete}

In Section~\ref{sec:gegenbauer}, we used Gegenbauer's addition formula to express the third moment as a sum obeying the upper bound
\begin{equation*}
\vol(B_r)^3  (rT)^{-2n} \sum_{k=0}^{\infty} (\nu+k) k^{n/2-3} \left( \int_0^{(1+w)rT} uJ_{\nu}(u)J_{\nu+k}(u) du \right)^2 
\end{equation*}
up to a constant multiple.
In Section~\ref{sec:bessel}, we estimated the integral appearing in the summands for each $k \geq 0$.
In this section, we estimate the sum over $k$ and deduce that the third moment compares favourably to the variance. We write $m = \nu+k$.

We distinguish a bulk contribution, where $m \leq 2rT$, from a tail contribution where $m > 2rT$. 
For $m$ in the bulk, combining the three ranges of integration gives the upper bound from (\ref{eqn:small-m}):
\[
\int_0^{(1+w)rT} u|J_{\nu}(u)J_m(u)| du \lesssim rT + m^{1/2+\delta} + e^{-cm^{3\delta/2}} \lesssim rT.
\]
For $m$ in the tail, only the beginning of the integral is present:
\[
\int_0^{(1+w)rT} u|J_{\nu}(u)J_m(u)| du \lesssim  e^{-cm^{3\delta/2}}.
\]
Since $m^{n/2-5/2}\exp(-cm^{3\delta/2})$ is summble, the tail contributes only a bounded amount in addition to the bulk sum:
\[
\sum_{m=\nu}^{\infty} m^{n/2-52} \left( \int_0^{(1+w)rT} u J_{\nu}(u)J_m(u) du \right)^2 \lesssim 1 + \sum_{m < 2rT} m^{n/2-5/2} (rT)^2
\]
The upper bound on the third moment becomes
\begin{align*}
&\vol(B_r)^3  (rT)^{-2n} \sum_{k=1}^{\infty} (\nu +k)k^{n/2-3} \left( \int_0^{(1+w)rT} uJ_{\nu}(u)J_{\nu+k}(u) du \right)^2 \\
&\lesssim \vol(B)^3 (rT)^{-2n} \sum_{m < 2rT} m^{n/2-2} (rT)^2 \\
&\lesssim \vol(B)^3 (rT)^{-2n+2+n/2-2+1} \\
&= \vol(B)^3 (rT)^{-3n/2+1}.
\end{align*}
This compares favourably to the variance:
\[
\frac{\int_B \int_B \int_B J_{12}J_{23}J_{31} }{\left( \int \int J_{12}^2 \right)^{3/2} } \lesssim \frac{(rT)^{-3n/2+1}}{(rT)^{-3(n-1)/2}} =  (rT)^{-1/2} \rightarrow 0
\]
which converges to $0$.

For any (positive definite) quadratic form in Gaussians, Proposition~\ref{prop:quads} gives
\[
\left| \sum_{p=3}^{\infty} \tr(A^p) \frac{2^{p-1} i^p t^p}{p\sigma^p} \right| \leq \sum_{p=3}^{\infty} \left( \frac{ 2|t|\tr\big( A^3 \big)^{1/3} }{\big(2 \tr(A^2) \big)^{1/2}}\right)^p \lesssim |t|^3 \frac{ \tr(A^3) }{\tr(A^2)^{3/2}}
\]
We have shown that for our particular form $\int_B \phi^2$ in any dimension $n \geq 3$, the second and third moments obey
\begin{align*}
\frac{\tr(A^3)}{\tr(A^2)^{3/2}} &= \frac{ \int_B \int_B \int_B K(x_1,x_2)K(x_2,x_3)K(x_3,x_1) dx_3 dx_2 dx_1 }{\left( \int_B \int_B K(x,x')^2 dx dx' \right)^{3/2}} \\
&\lesssim (rT)^{-1/2}
\end{align*}
This implies the central limit theorem in the form
\[
\E\left[ e^{i t Z} \right] = e^{-t^2/2} \left(1 + O\left( (rT)^{-1/2} \right) \right)
\]
as $rT \rightarrow \infty$. 
Thus we have proved the central limit theorem for $n \geq 3$ using Gegenbauer's addition formula.

\section{Two-dimensional case} \label{sec:2d}

In dimension 2, we have $\nu = n/2-1 = 0$ so that Gegenbauer's addition formula is not available and we must argue differently.
Note that the variance bound is the same for $n=2$ as for higher $n$, namely $(rT)^{-n+1}$ or just $(rT)^{-1}$ in this case. A significant difference is that the exponent $\nu-1/2 = (n-3)/2$ on the weight $(1-t^2)^{\nu-1/2}$ becomes negative. The kernel $J_0(u)$ decays only as $u^{-1/2}$ instead of saving an additional power $u^{-(n/2-1)}$. The Gegenbauer polynomials reduce to Chebyshev polynomials $T_n(\cos{\theta}) = \cos(n\theta)$. 

As $r \rightarrow 0$, the local Weyl law leads to the same triple integral in Section~\ref{sec:euclid} no matter the global geometry of $M$. As a result, if a direct calculation proves the limit theorem on any particular manifold, one can draw the same conclusion in general. Such a calculation is possible on the sphere, and we will deduce the two-dimensional case from the instance $M = S^2$. First we review the theory of spherical harmonics, including the case of higher dimensions. In any dimension, one could in principle use the same strategy of reducing to the sphere, but the two-dimensional case is simplest.

Note that the window $\eta$ plays a minor role. Under the conditions of Theorem~\ref{thm:main}, the central limit theorem reduces to bounding the Euclidean multiple integrals $\int \int \int K_{12}K_{23}K_{31}$ and $\int \int K^2$. These same integrals are obtained from the sphere with $\eta$ small enough that the window $T-\eta < t_j \leq T$ amounts to a single degree of spherical harmonics. Thus the general case of any manifold $M$ with a window as in Theorem~\ref{thm:main} follows from the case of $M = S^2$ with, say, $\eta = 1/2$.

\subsection{Spherical harmonics} \label{sec:spharmonics}

The standard basis of spherical harmonics of degree $l$ on $S^{p+1}$ is parametrized by integers 
\[
m_0 \geq m_1 \geq \ldots \geq m_p \geq 0
\]
whose sum is the total degree $\sum_j m_j = l$. For any such vector $m = (m_0, \ldots, m_p, \pm)$ together with a choice of sign $\pm$, one has a spherical harmonic
\[
f_{m} = e^{\pm i m_p \varphi} \prod_{k=0}^{p-1} (\sin\theta_{k+1})^{m_{k+1}} C_{m_k - m_{k+1}}^{m_{k+1}+(p-k)/2}(\cos {\theta_{k+1}} )
\]
where $C_{n}^{\lambda}$ denotes the Gegenbauer polynomial.
 As $m$ varies, these harmonics are orthonormal and span the space of all harmonics of degree $l$. For real-valued harmonics, it suffices to replace $e^{\pm i m_p \varphi}$ by $\sin(m_p \varphi)$ and $\cos(m_p \varphi)$ (up to a $\sqrt{2}$ factor of normalization).
These functions come from separation of variables in the coordinates
\begin{align*}
x_1 &= \cos{\theta_1} \\
x_2 &= \sin{\theta_1} \cos{\theta_2} \\
&\vdots \\
x_{p+1} &= \sin\theta_1 \cos \theta_2 \ldots \cos\varphi \\
x_{p+2} &= \sin\theta_1\cos\theta_2 \ldots \sin\varphi
\end{align*}
on $\R^{p+2}$, where the angles $\theta_j$ range from $0$ to $\pi$ while the final angle $\varphi$ ranges over a full circle $0 \leq \varphi < 2\pi$. We refer to \cite{E}, section 11.2 and especially formula 11.2(23).

Fix any point of $S^{p+1}$ as origin, say $(1,0,\ldots, 0)$. The distance to the origin is then given by $\cos{\theta_1}$. We separate variables into a radial part and an angular part varying over the lower-dimensional sphere $S^p$. Thus
\[
f_{m}(\omega) = R_{m}(\theta_1)Y_{m}(\alpha)
\]
where
\[
R_{m}(\theta_1) =  (\sin\theta_{1})^{m_{1}} C_{m_0- m_{1}}^{m_{1}+p/2}(\cos {\theta_{1}} )
\]
and the remaining factors are such that $Y_{m}$ is a spherical harmonic of degree $l-m_0$ on $S^p$. Note that 
\[
x_2^2 + \ldots x_{p+2}^2 = 1 - x_1^2 = (\sin\theta_1)^2
\]
and the corresponding coordinates on the lower-dimensional sphere $S^{p}$ are simply scaled by $\sin{\theta_1}$ in order to get $x_2, \ldots x_{p+2}$. The spherical volume element on $S^{p+1}$ is
\begin{align*}
d\omega &= (\sin\theta_1)^{p} (\sin\theta_2)^{p-1} \cdots (\sin\theta_p) \ d\theta_1 d\theta_2 \ldots d\theta_p d\varphi \\
&= (\sin\theta_1)^{p} d\theta_1 d\alpha
\end{align*}
where $d\alpha$ is the spherical volume element on $S^p$. In particular, two harmonics $f_m$ and $f_{m'}$ with vectors $m \neq m'$ are orthogonal on any ball $\{ \theta_1 < r \}$ because the angular parts are already orthogonal over $S^p$. Note that the condition $\sum_j m_j = l$ rules out the possibility that $f_m$ and $f_{m'}$ agree on their angular coefficients but have $m_0 \neq m'_0$. Such harmonics would be of different degree.

For the two-dimensional sphere $S^2$, we have $p=1$. The coordinates are just $\theta_1$ and $\varphi$. Because of the constraint $m_0+m_1 = l$, there is essentially just one component $m_0$ together with a choice of sign $\pm$. Thus we have the usual basis harmonics $Y_l^m$
\[
e^{\pm (l-m_0)\varphi} (\sin\theta_1)^{m_0} C_{2m_0-l}^{l-m_0+1/2}(\cos{\theta_1})
\]

Now expand any given harmonic $\phi$ in the basis functions $f_m$ with respect to a chosen origin:
\[
f = \sum_m c_m f_m
\]
where $m=(m_0,\ldots,m_{p}, \pm)$ varies over all vectors as above, that is, $\sum m_j = l$ and the entries of $m$ are decreasing from $m_0$ to $m_p$. 
We also specify a trigonometric factor $e^{\pm i m_p \varphi}$ via the choice of sign.
By orthogonality, integrating $f^2$ over a ball around the origin gives a diagonal quadratic form in the coefficients $c_m$:
\[
\int_B f^2 = \sum_m c_m^2 \int_B f_m^2.
\]
For our application, the functions $f_m$ must be normalized so that $\int_{S^{p+1}} f_m^2 = 1$. Thus
\[
\int_B f^2 = \sum_m c_m^2 \frac{ \int_0^r R_m(\theta_1)^2 \sin(\theta_1)^p d\theta_1 }{\int_0^{\pi} R_m(\theta_1)^2 \sin(\theta_1)^p d\theta_1 }
\]
since the angular integrals over $S^p$ are the same whether we integrate over the ball of radius $r$ or over the whole sphere $S^p$, that is, the ball of radius $\pi$. Note that the radial part $R_m$ depends only on $m_0$ and $m_1$. Given $m_0$ and $m_1$, the extra components $m_2, \ldots, m_p$ and the sign $\pm$ weight the corresponding summand by a combinatorial factor.
For example, if $m_0+m_1 = l$, the weight is 2 because it must be that $m_2 = \ldots m_p = 0$, leaving only the two choices of sign. Likewise, if $m_0+m_1 = l-1$, it must be that $m_2 = 1$ and the smaller components vanish, so the weight is again 2. But the larger the deficit between $m_0+m_1$ and the total degree $l$, the more combinations arise. A simplification available for $S^2$ compared to higher-dimensional spheres is that $m_0,m_1$ are the only parameters and no further cases occur.

For large $n$, we have Szeg\H{o}'s asymptotic for the Gegenbauer polynomial:
\[
\left( \frac{1}{2}\sin{\theta} \right)^{a} P_n^{(a,a)}(\cos{\theta}) \approx N^{-a} \frac{\Gamma(n+a+1)}{n!} J_{a}(N\theta)
\]
where $N = n + a + 1/2$. The error term is $O(\theta^{1/2}n^{-3/2})$, and even smaller if $\theta \lesssim 1/n$.
See formula (8.21.17) in \cite{S}.
For us, $n = m_0-m_1$ and $a = m_1 + (p-1)/2$.
Thus
\[
\frac{ \int_0^r C_{m_0-m_1}^{m_1+p/2}(\cos{\theta}) \sin(\theta)^{2m_1+p} d\theta }{\int_0^{\pi} C_{m_0-m_1}^{m_1+p/2}(\cos{\theta}) \sin(\theta)^{2m_1+p} d\theta } \approx \frac{\int_0^{r(m_0+p/2)} tJ_{m_1+p/2}(t)^2 dt}{\int_0^{\pi(m_0+1/2)} tJ_{m_1+p/2}(t)^2 dt}
\]

\subsection{Central limit theorem on the two-dimensional sphere} \label{sec:s2}

When $M = S^2$, the Laplace eigenvalues are $m(m+1)$ with multiplicity $2m+1$, where $m = 0, 1, 2, \ldots$ is the degree (instead of $l$, as above in arbitrary dimension). If $T = \sqrt{m(m+1)}$ and $\eta$ is a small constant (for instance, $1/2$), then the window $T - \eta < t_j \leq T$ contains $2m+1$ copies of a single frequency equal to $\sqrt{m(m+1)}$.
Note that $T = \sqrt{m(m+1)} \sim m+1/2$, so we may take $m$ rather than $T$ as the key asymptotic parameter.
The eigenvalues of the quadratic form $\int_B \phi^2$ are given by
\begin{equation*}
\lambda_j = \frac{1}{2m+1} \frac{1}{\vol(B_r)} \int_{B_r} \phi_j^2.
\end{equation*}
This was used in \cite{dci} to estimate the $\lambda_j$ as follows. There are three cases, which we think of as ``bulk", ``edge", and ``tail".

\begin{proposition} \label{prop:diag}
Fix a point $z \in S^2$ and choose for our basis functions $\phi_j$ the standard ultraspherical polynomials rotated so that $z$ is at the North pole $(0,0,1)$. Then
\begin{equation*}
\frac{1}{\vol(B_r(z)} \int_{B_r(z)} \phi^2 = \sum_{\nu = 0}^{2m} \lambda_{\nu} \mathfrak{z}_{\nu}^2
\end{equation*}
where the $\mathfrak{z}_{\nu}$ are independent standard Gaussians for $0 \leq \nu \leq 2m$ and the coefficients $\lambda_{\nu}$ satisfy
\begin{equation}
\lambda_{2k} = \lambda_{2k+1} = \frac{1}{2 \pi^2} \sqrt{1 - \left(\frac{k}{rm}\right)^2 }\frac{1}{rm}\left(1 + O_{a}\left(\frac{k^{2/3+a}}{rm}\right) \right)
\end{equation}
for any $a > 0$ and a ratio $0 \leq k/(rm) < 1$ bounded away from $1$. For $k$ close to $rm$, we have an upper bound
\begin{equation}
\lambda_k \lesssim_{a} (rm)^{-4/3 + a}.
\end{equation}
For $k$ so large that $k + k^p > rm$, where $p > 1/3$, we have
\begin{equation}
\lambda_k \lesssim_p \frac{ \exp(-c k^{(3p-1)/2}) }{(rm)^2}
\end{equation}
for a constant $c > 0$.
\end{proposition}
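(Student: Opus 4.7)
The plan is to diagonalize the quadratic form $\tfrac{1}{\vol(B_r(z))}\int_{B_r(z)}\phi^2$ in the rotated ultraspherical basis of degree-$m$ harmonics described in Section~\ref{sec:spharmonics}, placing $z$ at the north pole. Each basis harmonic then factors as $R_k(\theta)\Phi_k(\varphi)$ with radial part $R_k(\theta)=(\sin\theta)^k C_{m-k}^{k+1/2}(\cos\theta)$ and angular part $\Phi_k\in\{\cos k\varphi,\sin k\varphi\}$, and the two signs of $\Phi_k$ for $k\geq 1$ account for the doubling $\lambda_{2k}=\lambda_{2k+1}$ appearing in the statement. Since the $\Phi_k$'s are mutually orthogonal on $[0,2\pi]$, the matrix $\int_{B_r}\phi_j\phi_{j'}$ is automatically diagonal on every concentric disk, and after accounting for the global $L^2(S^2)$-normalization one reads off
\[
\lambda_k=\frac{1}{(2m+1)\vol(B_r(z))}\,\frac{\int_0^r R_k(\theta)^2\sin\theta\,d\theta}{\int_0^\pi R_k(\theta)^2\sin\theta\,d\theta}.
\]

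From there I would invoke Szeg\H{o}'s asymptotic (formula 8.21.17 of \cite{S}), exactly as it was already used in Section~\ref{sec:spharmonics}, to trade the Gegenbauer integrals for Bessel integrals. After the substitution $t=(m+1/2)\theta$ the ratio becomes
\[
\frac{\int_0^{r(m+1/2)}tJ_{k+1/2}(t)^2\,dt}{\int_0^{\pi(m+1/2)}tJ_{k+1/2}(t)^2\,dt}.
\]
The denominator is firmly in the oscillatory regime because $\pi(m+1/2)\gg k+1/2$; combining the average $tJ_\nu(t)^2\sim\tfrac{2}{\pi}\cos^2(\cdot)$ with the exact identity $\int_0^T tJ_\nu(t)^2\,dt=\tfrac{T^2}{2}\bigl(J_\nu(T)^2-J_{\nu-1}(T)J_{\nu+1}(T)\bigr)$ yields a denominator of size $(m+1/2)/\pi$.

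For the bulk formula I would use the refined large-argument form $J_\nu(t)\sim\sqrt{2/(\pi\sqrt{t^2-\nu^2})}\cos(\cdot)$ in the numerator, producing $\int_0^T tJ_\nu(t)^2\,dt=\tfrac{1}{\pi}\sqrt{T^2-\nu^2}+(\text{oscillatory remainder})$ and hence a numerator of size $(rm/\pi)\sqrt{1-(k/rm)^2}$ throughout the range $k\leq rm-O(k^{1/3+a})$. Combining with $\vol(B_r(z))=\pi r^2\bigl(1+O(r^2)\bigr)$ and $(2m+1)\sim 2m$ gives the claimed main term $\tfrac{1}{2\pi^2 rm}\sqrt{1-(k/rm)^2}$; the relative error $O(k^{2/3+a}/(rm))$ comes from tracking the Szeg\H{o} remainder together with the contribution of the Bessel oscillations near $t\sim k$.

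The edge regime $k\approx rm$ I would handle by replacing the classical asymptotic with the transitional bound $J_\nu(t)\lesssim \nu^{-1/3}$ of (\ref{eqn:transition}) on a window of width $k^{1/3}$ around $t=k$, which contributes at most $k\cdot k^{-2/3}\cdot k^{1/3}\sim k^{2/3}$ to the numerator and hence $\lambda_k\lesssim k^{2/3+a}/(m^2 r^2)\lesssim (rm)^{-4/3+a}$ when $k\sim rm\sim m$. The tail regime $k+k^p>rm$ follows from inequality (\ref{eqn:kap}) applied with $x=r(m+1/2)/(k+1/2)<1-ck^{p-1}$; the series expansion of $\log x+\sqrt{1-x^2}-\log(1+\sqrt{1-x^2})$ around $x=1$ used in Section~\ref{sec:bessel} yields the saving $\exp(-ck^{(3p-1)/2})$ after the polynomial prefactors are absorbed. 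The main technical obstacle I anticipate is making the Szeg\H{o} error and the Bessel remainder interlock uniformly as $k/(rm)\to 1$, since both degrade in the transition window; this tension is exactly what forces the $a>0$ loss in the exponents of both the bulk and edge estimates.
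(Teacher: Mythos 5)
Your route is the same as the paper's: the paper itself defers the detailed estimates to the reference \cite{dci}, but Section~\ref{sec:spharmonics} sets up exactly your argument --- diagonalization of $\int_{B_r}\phi^2$ by orthogonality of the angular factors on concentric caps, the formula $\lambda_k=\frac{1}{(2m+1)\vol(B_r)}\int_0^rR_k^2\sin\theta\,d\theta\big/\int_0^\pi R_k^2\sin\theta\,d\theta$, Szeg\H{o}'s asymptotic (8.21.17) to trade the Gegenbauer integrals for $\int t J(t)^2\,dt$, and then the three Bessel regimes of Proposition~\ref{prop:bessel-facts} for the bulk, edge, and tail. Two small points deserve fixing. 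First, your denominator value is off by a factor of $\pi$: since the local average of $tJ_\nu(t)^2$ is $\tfrac1\pi\,t/\sqrt{t^2-\nu^2}\approx\tfrac1\pi$ for $\nu\ll m$, the integral over $[0,\pi(m+1/2)]$ is $\approx m+1/2$, not $(m+1/2)/\pi$; with your stated numerator $(rm/\pi)\sqrt{1-(k/rm)^2}$ your numbers as written would give $\tfrac{1}{2\pi rm}\sqrt{1-(k/rm)^2}$ rather than the proposition's $\tfrac{1}{2\pi^2 rm}\sqrt{1-(k/rm)^2}$; the correct denominator restores the stated constant. Relatedly, Szeg\H{o}'s Bessel approximation is not valid near $\theta=\pi$, so for the denominator you should either use the exact Gegenbauer (associated Legendre) normalization, which is known in closed form, or exploit the symmetry of $R_k^2$ about $\theta=\pi/2$ and apply the asymptotic only on $[0,\pi/2]$; the leading order happens to come out the same, but the rigorous statement needs one of these. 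Second, in the tail regime you implicitly read the hypothesis as the order exceeding $rm$ by a margin of size $k^p$ (so that $x=rN/k\le 1-ck^{p-1}$ in (\ref{eqn:kap})), which is the intended regime (it is how the bound is invoked in Section~\ref{sec:s2}); taken literally, ``$k+k^p>rm$'' would also include $k$ just below $rm$, where only the edge bound $(rm)^{-4/3+a}$, not the exponential one, can hold. The use of order $k+1/2$ versus $k$ for the Bessel functions is immaterial at this level of precision.
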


\begin{figure}[h]
\includegraphics[width=0.5\textwidth]{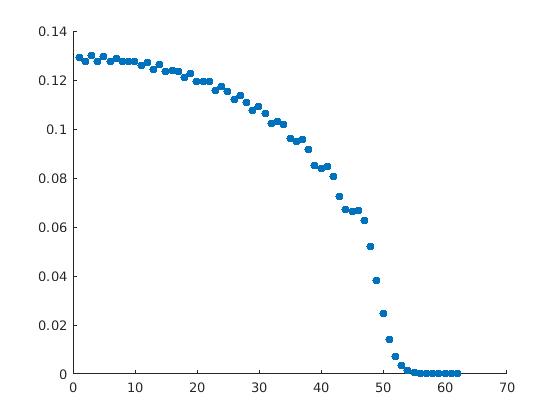}
\label{fig:semicircle}
\caption{
Ultraspherical integrals $\lambda_0, \lambda_1, \lambda_2, \ldots, \lambda_{2m}$ of degree $m=128$ for radius satisfying $rm = (\log{m})^{5/2}$. The plot shows that $\lambda_k$ approximately follows a semicircle and cuts off sharply near $rm = 51.857\ldots$, consistent with the support in Proposition~\ref{prop:diag}
}
\end{figure}

Thus the eigenvalues of $A$ follow the semi-circle law. Perhaps one should expect some GOE behaviour because $A$ is given by integrals $\int_B \phi_j \phi_k$ and there is a large orthogonal group of symmetries acting by change of basis on the functions $\phi_j$.

As in Section~\ref{sec:quad}, to show that $Z$ is approximately Gaussian, it is sufficient to show that
for $p \geq 3$
\begin{equation} \label{eq:himom}
\frac{\sum \lambda_j^p}{\sigma^p} \rightarrow 0.
\end{equation}
To verify (\ref{eq:himom}), we can use the semicircle law from Proposition \ref{prop:diag} to estimate $\sum \lambda_j^p$. First, we bound the contributions from $j$ near $rm$ or larger. 
Fix a value of $\delta > 0$ and write $j \approx rm$ as an abbreviation for $|j-rm| \leq (rm)^{1/3+\delta}$.
We have
\[
\sum_{j \approx rm} \lambda_j^p \lesssim_a (rm)^{1/3} (rm)^{p(-4/3+a)}.
\]
For larger $j$, we have
\[
\sum_{j + j^{1/3+\delta} > rm} \lambda_j^p \lesssim \frac{1}{(rm)^2} \sum_{k > rm} \exp(-ck^{3\delta/2})
\]
The tail sum is very small. For example, comparing to an integral gives
\[
\sum_{x > L} e^{-x^{\varepsilon}} \lesssim \int_L^{\infty} e^{-x^{\varepsilon}} dx \lesssim \frac{1}{\varepsilon} e^{-L^{\varepsilon}} L^{1-\varepsilon}
\]
with $\varepsilon = 3\delta/2$, $x \approx c^{2/(3\delta)}k$ so that $x^{\varepsilon} \approx ck^{3\delta/2}$, and $L = c^{1/\varepsilon} rm$. Meanwhile, the ``bulk" contribution is
\begin{align*}
\sum_{j < rm - (rm)^{1/3+\delta}} \lambda_j^p &= 2 \sum_{k=1}^{rm} \left( \frac{1}{2 \pi^2} \sqrt{1 - (k/(rm))^2 }\frac{1}{rm}\left(1 + O_{a}\left(\frac{k^{2/3+a}}{rm}\right) \right) \right)^p \\
&= \frac{2}{(2\pi^2)^p} \frac{1}{rm} \sum_{k=1}^{rm} \left( 1 - \left(\frac{k}{rm}\right)^2 \right)^{p/2} \frac{rm}{(rm)^p} \left(1 + O_{a}\left( \frac{p k^{2/3 + a}}{rm} \right) \right) 
\end{align*}
For the main term, we have a Riemann sum approximation
\[
\frac{1}{rm} \sum_{k=1}^{rm} (1- (k/(rm))^2 )^{p/2} \sim \int_0^1 (1-u^2)^{p/2} du.
\]
For the error term, we have
\begin{align*}
&\frac{2p}{(2\pi^2)^p(rm)^{p+1}} \sum_{k=1}^{rm} \left( 1 - \left(\frac{k}{rm}\right)^2 \right)^{p/2} k^{2/3+a} \\
& \lesssim \frac{p}{(2\pi^2)^p}\frac{(rm)^{2/3+a}}{(rm)^p} \int_0^1 (1-u^2)^{p/2} u^{2/3+a} du.
\end{align*}
Thus the bulk contribution is
\begin{equation}
\sum_{j \in \text{bulk}} \lambda_j^p = \frac{2}{(2\pi^2)^p} (rm)^{-p+1} \left( \int_0^1 (1-u^2)^{p/2} du + O_{p,a}\big((rm)^{-1/3 + a}\big) \right).
\end{equation}
Compare this to the edge contribution, bounded by $(rm)^{(-4/3+a)p+1/3}$, and the tail contribution, which obeys the even stronger bound $\exp(-c(rm)^{3\delta/2})$. For any chosen $a < 1/3$, these are negligible compared to the main term of order $(rm)^{-p+1}$ and even to the error term of order $(rm)^{-p+1-1/3+a}$. Thus we have, for the sum over all three ranges,
\[
\sum_j \lambda_j^p = \frac{2}{(2\pi^2)^p} (rm)^{-p+1} \left( \int_0^1 (1-u^2)^{p/2} du + O_{p,a}\big((rm)^{-1/3 + a}\big) \right)
\]
with an implicit constant proportional to $p\int_0^1 (1-u^2)^{p/2} u^{2/3+a} du$. 
In particular,
\begin{equation}
\sigma^2 = 2\sum_j \lambda_j^2 \sim \frac{1}{\pi^4} \frac{1}{rm}.
\end{equation}
From this, we obtain
\begin{equation}
\frac{\sum \lambda_j^p}{\sigma^p} \lesssim \frac{ (rm)^{-p+1} }{(rm)^{-p/2} } = (rm)^{-p/2+1},
\end{equation}
which is negligible as $rm \rightarrow \infty$ provided $p \geq 3$. Even when we sum over all $p \geq 3$, we obtain a geometric series:
\[
\sum_{p=3}^{\infty} \frac{\sum \lambda_j^p}{\sigma^p} \lesssim rm \sum_{p=3}^{\infty} \sqrt{rm}^{-p} \lesssim (rm)^{-1/2}.
\]

A final detail remains: We took $s= it/\sigma$, which we now justify by using the semicircle law to show that the power series for $\log(1-2s\lambda_j)$ does converge at that point. From the semicircle law Proposition~\ref{prop:diag}, the largest $\lambda_j$ are of order $1/(rm)$, whereas $\sigma$ is of order $1/\sqrt{rm}$. It follows that, for any given $t \in \R$, once $rm$ is sufficiently large we do have
\[
2 |t| \frac{\lambda_j}{\sigma} < 1
\]
for all $j$. The series will converge once $rm$ is so large that
\begin{equation}
\frac{1}{\sqrt{rm}} < \frac{1}{2|t|}.
\end{equation}
Combining these estimates, we have
\[
\log\E[e^{itZ}] = \frac{-t^2}{2} + \sum_{p=3}^{\infty}  \left( \frac{\sum_j \lambda_j^p}{\sigma^p} \right) (2it)^p = -\frac{t^2}{2} + O\left( (rm)^{-1/2} |t|^3 \right)
\]
and hence
\[
\E\left[ e^{itZ} \right] = \left(1 + O\big( (rm)^{-1/2}|t|^3 \big) \right) e^{-t^2/2}.
\]
For any fixed $t$, this converges to $e^{-t^2/2}$ as $rm \rightarrow \infty$, which completes our proof of Theorem~\ref{thm:main}.

\subsection{Failure of the CLT at the wave scale} \label{sec:fail}
Suppose that $rT = c$ for a fixed constant $c$, or more generally that $rT$ remains bounded as $T \rightarrow \infty$. At this scale, we do not expect the central limit theorem to hold, and a direct calculation on $S^2$ supports this. From the variance formula (\ref{eq:p-moment}) with $p=2$,
\[
\sum_j \lambda_j^2 = \tr(A^2) = \int_B \int_B \left( \frac{K(x,y)}{N} \right)^2 \frac{dx}{\vol{B}} \frac{dy}{\vol{B}} \lesssim 1. 
\]
We have $\lambda_j > 0$ because they are the eigenvalues of the positive quadratic form $\int_B \phi^2$.
It follows that
\[
\sum_j \lambda_j^3 \geq \lambda_0^3
\]
where $\lambda_0$ is the eigenvalue corresponding to the zonal spherical harmonic.
As in the proof of the semicircle law Proposition~\ref{prop:diag}, we have
\[
\lambda_{2k} = \lambda_{2k+1} \sim \frac{1}{2\pi (rm)^2} \int_0^{rm} xJ_k(x)^2 dx.
\]
But unlike Proposition~\ref{prop:diag}, we now have $rm = c$ of order 1, so that
\[
\lambda_0 \sim \frac{1}{2\pi c^2} \int_0^c xJ_0(x)^2 dx \gtrsim 1. 
\]
It follows that
\[
\frac{\tr(A^3) }{\tr(A^2)^{3/2} } \gtrsim 1.
\]
Thus the criterion of Proposition~\ref{prop:quads} fails, and we believe that instead of converging to a Gaussian, the standardized local integral $Z$ converges to a weighted sum of finitely many squares of Gaussians (with a number of degrees of freedom proportional to $c$, the higher values of $k$ being exponentially suppressed).

\section{Tail probability} \label{sec:tail}

One consequence of Theorem~\ref{thm:main} is that the random variable $X = \int_B \phi^2$ has tails satisfying
\[
\prob\left( |X - \E[X] | > y \sqrt{\var[X]} \right) \lesssim e^{-y^2/2}
\]
for any fixed $y >0$. Note that $\var[X] \asymp (rT)^{-n+1} \rightarrow 0$, and another approach is needed to control deviations where $|X - \E[X] |$ exceeds a given $\varepsilon > 0$ of larger size. One obtains such a bound using the foregoing estimates together with a Chernoff bound. Namely, for any $s \geq 0$ for which $\E[e^{sX}]$ is defined, the upper tail probability obeys
\begin{equation}
\prob(X - \E[X] > \varepsilon) \leq e^{-s\varepsilon - s\E[X]} \E\left[e^{sX} \right].
\end{equation}
The lower tail event $X - \E[X] < -\varepsilon$ can be treated with the same procedure applied to $-X$ instead of $X$.
The parameter $s$ is at our disposal, and the optimal choice is to minimize $\log \E\left[ e^{sX} \right] - s\E[X] - s\varepsilon$. As we saw in Section~\ref{sec:quad}, 
\[
\log \E\left[ e^{sX} \right] - s\E[X] - s\varepsilon = -s\varepsilon - s\sum_j \lambda_j - \frac{1}{2} \sum_j \log(1-2s\lambda_j)
\]
with both sides defined for $1-2s\lambda_{\max} > 0$, where $\lambda_{\max}$ is the largest $\lambda_j$.
Differentiating with respect to $s$, we find that the equation for the optimal $s$ is
\[
-\varepsilon - \sum_j \lambda_j + \sum_j \frac{\lambda_j}{1 - 2s\lambda_j}.
\]
Upon expanding $\lambda_j/(1-2s\lambda_j)$ in a geometric series, the first term $p=1$ cancels $\sum_j \lambda_j$ and we obtain
\[
\varepsilon = \sum_{p \geq 2} (2s)^{p-1} \sum_j \lambda_j^p.
\]
Truncating at $p=2$ suggests the choice
\begin{equation} \label{eqn:suggested-s}
s = \frac{\varepsilon}{2 \sum \lambda_j^2 }
\end{equation}
but it is not clear whether $1 - 2s\lambda_{\max} > 0$, that is, whether this choice of $s$ is valid.
Using our bound on the third moment
$\tr(A^3) \lesssim (rT)^{-3n/2+1}$
we conclude that
\begin{equation}
\lambda_{\max} \leq \left( \sum_j \lambda_j^3 \right)^{1/3} \lesssim (rT)^{-n/2+1/3}
\end{equation}

With the desired choice of $s$,
\[
2s\lambda_{\max} = \varepsilon \left( \sum_j \lambda_j^2 \right)^{-1} \lambda_{\max} \lesssim \varepsilon (rT)^{n/2 - 2/3} 
\]
by the third-moment bound for $\lambda_{\max}$ and the fact that $\sum \lambda_j^2$ is bounded above and below by constant multiples of $(rT)^{-n+1}$. It follows that the choice (\ref{eqn:suggested-s}) is valid provided
\[
\varepsilon (rT)^{n/2 - 2/3} \rightarrow 0
\]
or more generally when $\varepsilon$ is a sufficiently small constant multiple of $(rT)^{-n/2 + 2/3}$.

When $\varepsilon$ is small enough that (\ref{eqn:suggested-s}) is a valid choice of $s$, the Chernoff bound gives
\begin{align*}
\prob(X - E[X] \geq \varepsilon ) &\leq \exp\left( -s(\E[X]+\varepsilon) + \frac{1}{2} \sum_j -\log(1-2s\lambda_j) \right) \\
&\lesssim \exp \left(-\frac{1}{4} \varepsilon^2 \big( \sum \lambda_j^2 \big)^{-1} \right)
\end{align*}
assuming we can truncate the Taylor expansion at second order. This truncation is justified under the same condition allowing the choice of $s$ to begin with, since the higher terms can be bounded by
$
\varepsilon^3 \left( \sum_j \lambda_j^2 \right)^{-3} \sum_j \lambda_j^3. 
$
This completes the proof of Theorem~\ref{thm:tail}. \qed

Even when (\ref{eqn:suggested-s}) is not valid, one can obtain an upper bound by choosing a different $s$ (and (\ref{eqn:suggested-s}) is itself only an approximation to the optimal choice).
The workaround from \cite{dcit} is to instead take $s$ to be a small multiple of $\big(\sum \lambda_j^2 \big)^{-1/2}$, the validity of which follows from the bound $\lambda_{\max} \leq \sqrt{ \sum \lambda_j^2}$. 
This allows one to take, for example, $\varepsilon$ of order 1 at the cost of replacing $(rT)^{n-1}$ by $(rT)^{(n-1)/2}$ in Theorem~\ref{thm:tail}.

\section{Conclusion and outlook} \label{sec:conc}

As explained in Section~\ref{sec:variance}, the exponent $(rT)^{-n+1}$ in our lower bound on the variance is optimal. Less clear is the true size of the third moment. One approach to this would be to give an explicit treatment of higher-dimensional spheres along the lines of the semicircle law for $S^2$.

One of the aspects of the Central Limit Theorem is its universality: The normal distribution appears as a limit for a wide class of underlying distributions for the coefficients. In contrast, we have assumed Gaussian coefficients from the beginning, so it is perhaps less surprising that one recovers a Gaussian in the limit. An interesting problem would be to prove the CLT for random waves with other distributions for the coefficients $c_j$, not necessarily Gaussian. 
We expect that the central limit theorem will hold for a quite broad class of random coefficients.
Indeed, it might not be necessary to randomize at all if the underlying manifold $M$ has favourable dynamical properties.

Some further examples of coefficients seem to be within reach, namely if (a) the vector of coefficients is rotation-invariant, or (b) several moments of the coefficient distribution agree with those of a Gaussian. Assuming (a), the vector of coefficients is rotation-invariant and as above one can diagonalize the quadratic form $X = \sum_j \sum_k c_j c_k \int_B \phi_j \phi_k$ to obtain a sum of independent random variables. Lyapunov's criterion then applies and reduces the central limit theorem to estimates for the second and third moments as above. Assuming (b) up to moments of order six, these estimates follow from the direct calculation with Gaussians carried out here.
Note that the $p$-th moment of the quadratic form $X = \int_B \phi^2$ only involves moments up to order $2p$ of the coefficients of $\phi$.
For an example of rotation-invariant but non-Gaussian coefficients, one can take the vector of coefficients uniformly from the sphere $S^N$. 
However, the coefficients in this example are not independent.

We have given an upper bound on the probability that $\int_B \phi^2$ deviates from its average, at least in some ranges. An interesting problem would be to use the theory of large deviations to give a comparable lower bound on this probability. It could also be possible to prove the Central Limit Theorem directly from this machinery.

\section*{Acknowledgments}

We thank Yaiza Canzani for helpful discussions about Weyl's law.
MDCI thanks Peter Sarnak for his advice, encouragement, and support over the course of this work and thanks the Natural Sciences and Engineering Research Council of Canada for a PGS D grant. ML thanks Rupert L. Frank for multiple stimulating discussions on the topic, and thanks the Institute for Advanced Study for its hospitality during the 2017-2018 academic year.

\end{document}